\documentclass{article}
\usepackage{a4wide}

\usepackage{amssymb,amsmath}
\usepackage{amsthm}
\usepackage{graphicx}
\usepackage[english]{babel}
\usepackage{relsize}
\usepackage{color}
\usepackage{enumitem}
\usepackage{color}

\def\rr{\mathbb{R}} 
\def\sF{\mathcal{F}} 
\def\Pr{\mathbb{P}} 
\def\EX{\mathbb{E}}	
\def\dif{\mathrm{d}} 
\def\landau{\mathcal{O}} 
\def\part{\mathcal{T}_{\mathbf{h}}^N} 
\def\e{\textnormal{e}} 
\def\eps{\epsilon}

\def\nhat{\widehat}
\def\nbar{\overline}

\def\tn{\textnormal}

\theoremstyle{plain}
\newtheorem{theorem}{Theorem}[section]

\newtheorem{lemma}[theorem]{Lemma}

\theoremstyle{definition}
\newtheorem{definition}[theorem]{Definition}
\newtheorem{remark}[theorem]{Remark}

\numberwithin{equation}{section}


\begin{document}

\title{Strong Error Analysis of the $\Theta$-Method for Stochastic Hybrid Systems}

\author{Martin G.~Riedler\footnotemark[1]$~$ and Girolama Notarangelo\footnotemark[2]}

\renewcommand{\thefootnote}{\fnsymbol{footnote}}
\footnotetext[1]{%
martin.riedler@jku.at\\[2pt]
Institute for Stochastic,
Johannes Kepler University Linz,
Altenbergerstra\ss e 69,
4040 Linz, Austria.} 
\footnotetext[2]{%
girolama.notarangelo@jku.at\\[2pt]
Institute for Stochastic,
Johannes Kepler University Linz,
Altenbergerstra\ss e 69,
4040 Linz, Austria.}
\renewcommand{\thefootnote}{\arabic{footnote}}

\maketitle

\begin{abstract}
We discuss numerical approximation methods for Random Time Change equations which possess a deterministic drift part and jump with state-dependent rates. It is first established that solutions to such equations are versions of certain Piecewise Deterministic Markov Processes. Then we present a convergence theorem establishing strong convergence (convergence in the mean) for semi-implicit Maruyama-type one step methods based on a local error analysis. The family of $\Theta$--Maruyama methods is analysed in detail where the local error is analysed in terms of It{\^o}-Taylor expansions of the exact solution and the approximation process. The study is concluded with numerical experiments that illustrate the theoretical findings.
\end{abstract}

%

\section{Introduction}

In this study we consider the convergence of numerical approximations for Random Time Change Equations (RTEs) of the form
\begin{equation}\label{random_time_change_eq}
X(t)=X(0)+\int_0^t f(X(s))\,\dif s + \sum_{k=1}^pY_k\biggl(\int_0^t \lambda_k(X(s))\,\dif s\biggr)\,\nu_k\,,
\end{equation}
where $(Y_k(t))_{t\geq 0}$, $k=1,\ldots,p\in\mathbb{N}$, are independent unit rate Poisson processes on a complete probability space $(\Omega,\sF,\Pr)$ and $\nu_k\in\rr^d$ are the jump heights. The coefficient functions $f:\rr^d\to\rr^d$ and $\lambda_k:\rr^d\to\rr_+$ are Lipschitz continuous, bounded and in general sufficiently smooth such that all occurring derivatives exist and are bounded.
Here the first coefficient $f$ governs the continuous movement of the system and the remaining coefficients $\lambda_k$ are state-dependent instantaneous jump rates. A unique pathwise solution $(X(t))_{t\geq 0}$ of \eqref{random_time_change_eq} exists under these assumptions on the coefficients. The probability space $(\Omega,\sF,\Pr)$ is equipped with a suitable filtration $(\mathcal{G}_t)_{t\geq 0}$ to which the solution $(X(t))_{t\geq 0}$ of \eqref{random_time_change_eq} is adapted and such that it satisfies a strong Markov property. This filtration is specially constructed and we refer to Section \ref{section_set_up} for the details. The main results of the present study are the following.

First, we show that RTEs \eqref{random_time_change_eq} are equivalent in law to certain Piecewise Deterministic Markov Processes (PDMPs) by studying the martingale problem for these processes. This class of PDMPs are of particular importance for modelling chemical reaction networks with multiple, clearly separated time scales \cite{Debussche}. Examples of PDMPs in (bio-)chemical applications in recent years are, e.g., models of excitable biological membranes \cite{BuckwarRiedler,ClayDefelice,Wainrib1,RudigerFalcke}, biochemical reaction systems exhibiting multiple time-scales \cite{Huisinga,Haseltine,Kalantzis} or gene regulatory networks \cite{Zeisleretal}. Additional to these applications in bioscience PDMP models are also of high interest in fields such as control and queueing theory, mathematical finance and ecology, cf.~\cite{Davis1,Davis2,Jacobsen,Hanson} and references therein. The advantage of these models being cast in the form of RTEs \eqref{random_time_change_eq} over their PDMP formulation is that the former is substantially better suited for theoretical analysis. The method of representing stochastic processes as random time changes to certain standard types of processes, e.g., Poisson processes in our case, see \cite{EthierKurtz} for the general concept, was recently employed in a series of studies to analyse tau-leaping methods for chemical reactions systems in \cite{AndGanKurtz,AndersonHigham,AndersonKoyama}.

Secondly, we present convergence theorems for the strong convergence of non-jump adapted, semi-implicit Maruyama-type methods. That is, for approximations $\nhat X_n$ to $X(t_n)$ on a grid $0=t_0<t_1<\ldots t_{\nbar n}=T$ we are concerned with the global error in the mean
\begin{equation}\label{def_strong_global_error}
\max_{n=1,\ldots,\nbar n}\EX|X(t_n)-\nhat X_n|\,.
\end{equation}
The convergence of the approximation methods is obtained via an analysis of the local error. Employing the general convergence theorems we then conduct a strong error analysis of the $\Theta$-Maruyama method for equation \eqref{random_time_change_eq}. 
We note that the importance of the strong error analysis, besides being of mathematical interest in itself, lies particularly in the fact of its necessity for Multi-level Monte Carlo methods. We believe that the methods in the present paper can be used as a guideline to derive Multi-level Monte Carlo methods for statistics of interest similar as in the case of pure-jump processes \cite{AndersonHigham}. 
In general numerical methods for jump processes, e.g., pure jump processes, PDMPs or jump-diffusions, can be grouped into two classes, one which aims to resolve the jump times and heights exactly and a second which only approximates in a given time interval the number of jumps and their cumulative height. The former class, called jump-adapted methods, contains the Stochastic Simulation Algorithm (SSA), viz.~Gillespie's method, for chemical reaction systems, viz.~continuous time Markov chains, jump adapted methods for JSODEs, see, e.g., the recent monograph \cite{PlatenLiberati}, and a recently presented jump-adapted method for PDMPs \cite{RiedlerAS_PDMP}. The second class contains fixed time step approximation methods for chemical reaction systems, called tau-leaping methods in this community, and jump-diffusions. To the best of our knowledge fixed time-step methods for PDMPs have not been considered yet. Thus due to the equivalence of RTEs \eqref{random_time_change_eq} to PDMPs the present paper fills the gap and considers fixed time-step methods for PDMPs.
We note that the aptness, applicability and efficiency of the two classes of methods depends on the equation under consideration and in particular on the relationship between the time-step and the jump intensity. Roughly speaking, slow, large jumps should be resolved by jump adapted methods and fast, small jumps by fixed time step methods. However, these are essentially issues regarding particular models and thus they are not within the general scope of the present study.\medskip

The remainder of the study is organised as follows. In Section \ref{section_set_up} we present a technically detailed description of solutions to equations \eqref{random_time_change_eq}, prove their existence and uniqueness and establish their equivalence to PDMPs. Furthermore, in this section we derive technical tools for the convergence analysis to follow. The detailed and technical proofs of this section are mainly deferred to the appendix. We introduce in Section \ref{section_conv_theorems} semi-implicit approximation methods for equation \eqref{random_time_change_eq} and the main convergence result is established. Next, in Section \ref{section_theta_method} we conduct a detailed strong error analysis of $\Theta$-Maruyama methods. The theoretical findings of the preceding sections are illustrated on numerical examples in Section \ref{section_numerical_examples}. The study is closed by drawing conclusions in Section \ref{section_conclusions} wherein we also briefly comment on some further related work.

\section{PDMPs represented as a Random Time Change Equations}\label{section_set_up}

In the introduction we have already stated the first main result that solutions of RTEs of the form \eqref{random_time_change_eq} are versions of certain PDMPs. This family of c\`adl\`ag strong Markov processes, introduced in \cite{Davis1,Davis2}, a more recent extended discussion can be found in \cite{Jacobsen}, combines deterministic continuous movement with random jump events. These two dynamics are coupled in a way such that the deterministic dynamics in between jumps depend on the outcomes of the preceding jumps, and the jump intensities and jump heights depend on the current continuously changing paths. In a simplifying but for the present study relevant case, one can think of such processes as generalisations of continuous time Markov chain, which are not constant in between jumps but follow a path given by the solution of an ODE. This structure is made clear by \emph{Davis' Construction Procedure} which is used in \cite{Davis1,Davis2} to define  such processes but remains valid for more general classes of PDMPs \cite{RiedlerPhD}. In the present study the general class of PDMPs is restricted so that we obtain an equivalence in law to the solution of \eqref{random_time_change_eq}. Thus, we assume that the jump dynamics are such that there occur almost surely only finitely many different jump heights which are given by $\nu_1, \ldots \nu_p \in\rr^d$. Further we set the total instantaneous jump rate $\lambda(x)=\sum_{k=1}^p\lambda_k(x)$ and define a Markov kernel $\mu$ on $\rr^d$ by the probabilities
\begin{equation*}
\mu(x,\{y\})=\left\{\begin{array}{cl}\displaystyle\frac{\lambda_k(x)}{\lambda(x)}& \textnormal{if } y=x+\nu_k \textnormal{ for some } k=1,\ldots,p,\\[2ex] 0 &\textnormal{otherwise}.\end{array}\right.
\end{equation*}
Then, classically, a PDMP $(X_\tn{PDMP}(t))_{t\geq 0}$ is defined by the following constructive method which we present in pseudo-code form.

\begin{center}
\begin{minipage}{0.9\textwidth}
\textbf{Start:} Set $\tau_0=0$ and sample a realisation of the initial condition $X_\tn{PDMP}(0)$.\\[1ex]
\textbf{For $n\geq 1$:} The $n$th jump time $\tau_n$ is a realisation of a random variable with distribution
\begin{equation*}
\Pr\bigl[\tau_n>t+\tau_{n+1}\bigr]\,=\,\exp\Bigl(-\int_0^t \phi(s,X_\tn{PDMP}(\tau_{n-1}))\,\dif s\bigr)\qquad\forall\,t\geq 0
\end{equation*}
and set
\begin{equation*}
 X_\tn{PDMP}(s)=\phi(s-\tau_{n-1},X_\tn{PDMP}(\tau_{n-1})) \textnormal{ for } s\in (\tau_{n-1},\tau_n)\,.
\end{equation*}
Sample a realisation of the post-jump value
\begin{equation*}
X_\tn{PDMP}(\tau_n)\sim \mu\bigl(\phi(\tau_n-\tau_{n-1},X_\tn{PDMP}(\tau_{n-1})),\cdot\bigr)\,. 
\end{equation*}
\textbf{End}
\end{minipage}
\end{center}

For a more inclusive and thorough discussion of PDMPs we refer to the monographs \cite{Davis2,Jacobsen} and the thesis of one of the present authors \cite{RiedlerPhD}. Here we only emphasise the fundamental difference of PDMPs to jump-diffusions without a Brownian motion part. For PDMPs the jump intensities are in general state-dependent and thus continuously changing in time whereas for jump-diffusions they are constant.

We close this discussion with the most important result (at present) for the characterisation of PDMPs. This result allows the identification of PDMPs with solutions of equation \eqref{random_time_change_eq}. For any function $F\in C^1_b(\rr^d)$, i.e., the set of bounded continuously differentiable functions from $\rr^d$ to $\rr$ with bounded first derivatives, the generator of a PDMP of the above type is given by\footnote{To clarify some notation used throughout the study we note that we use the notation $\nabla F[f]$ to indicate the application of the derivative of $F$ to $f$, where we omit or abbreviate the arguments of the functions whenever possible for the sake of simplicity. That is, in more detail, we use
\begin{equation*}
\nabla F[f](x)\,=\, \nabla F(x)[f(x)]\,=\,\sum_{j=1}^d \frac{\partial F}{\partial x_j}(x)\,f_j(x)\,.
\end{equation*}
}
\begin{eqnarray}\label{generator_pdmp}
\mathcal{A}F(x)&=&\nabla F[f](x)+\lambda(x)\int_{\rr^d} \Bigl(F(\xi)-F(x) \Bigr)\,\mu(x;\dif\xi)\nonumber\\[2ex] &=&\nabla F[f](x)+\sum_{k=1}^p\lambda_k(x)\,\Bigl(F(x+\nu_k)-F(x) \Bigr),
\end{eqnarray}
see \cite{Jacobsen}. It was proven in \cite[Thm.~2.5]{Debussche} (see also the appendix, where we present the result for the relevant setting in this study) that the martingale problem posed by this generator possesses a unique solution. The fact that a solution of equation \eqref{random_time_change_eq} is a version of the PDMP is established in the following theorem, stating that it solves the martingale problem posed by the generator of the PDMP. The proof of the theorem is deferred to the appendix.

\begin{theorem}\label{theorem_equivalence}  Equation \eqref{random_time_change_eq} possesses for almost all $\omega\in\Omega$ a unique c\`adl\`ag solution. Furthermore, this solution solves the martingale problem posed by the generator \eqref{generator_pdmp} and any $F\in C^1_b(\rr^d)$. 
\end{theorem}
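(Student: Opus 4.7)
The plan is twofold: first establish existence and uniqueness of a càdlàg pathwise solution by a Davis-style iterative construction on inter-jump intervals, and second verify the martingale problem via an elementary change-of-variable formula for piecewise-smooth càdlàg paths combined with the standard random-time-change theorem for Poisson processes.

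For existence and uniqueness I would fix $\omega$ outside the null set on which any $Y_k$ fails to be a càdlàg unit-jump counting process and construct $X$ inductively on inter-jump intervals $[\tau_{n-1},\tau_n)$. On such an interval equation \eqref{random_time_change_eq} reduces to the ODE $\dot X(t)=f(X(t))$ with initial value $X(\tau_{n-1})$, for which the Lipschitz hypothesis on $f$ yields a unique global solution $\phi(\cdot-\tau_{n-1},X(\tau_{n-1}))$. The next jump time $\tau_n$ is then determined as the first $t>\tau_{n-1}$ at which some $\int_{\tau_{n-1}}^t\lambda_k(\phi(s-\tau_{n-1},X(\tau_{n-1})))\,\dif s$ reaches the next arrival epoch of $Y_k$ strictly beyond $\int_0^{\tau_{n-1}}\lambda_k(X(s))\,\dif s$, and at $\tau_n$ the process is incremented by the corresponding $\nu_k$. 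Since $\lambda=\sum_k\lambda_k$ is bounded by assumption, the successive inter-jump times are stochastically dominated below by i.i.d.~exponentials with a fixed rate, which rules out explosion and yields a unique càdlàg process defined on all of $[0,\infty)$ for a.a.~$\omega$.

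For the martingale problem, fix $F\in C_b^1(\rr^d)$ and set $N_k(t):=Y_k\bigl(\int_0^t\lambda_k(X(s))\,\dif s\bigr)$. Between jumps $X$ is absolutely continuous with $\dot X=f(X)$ and has only finitely many jumps in any bounded interval, each of size $\nu_k$ at a jump time of $N_k$. The elementary change-of-variable formula for such processes yields
\begin{equation*}
F(X(t))-F(X(0))\,=\,\int_0^t\nabla F[f](X(s))\,\dif s+\sum_{k=1}^p\int_0^t\bigl(F(X(s^-)+\nu_k)-F(X(s^-))\bigr)\,\dif N_k(s).
\end{equation*}
Invoking the random-time-change theorem for Poisson processes with respect to the filtration $(\mathcal{G}_t)$ constructed in Section \ref{section_set_up}, each $N_k(t)-\int_0^t\lambda_k(X(s))\,\dif s$ is a $(\mathcal{G}_t)$-martingale. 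Subtracting $\int_0^t\mathcal{A}F(X(s))\,\dif s$ on both sides and absorbing the compensators into the $N_k$-integrals identifies $F(X(t))-F(X(0))-\int_0^t\mathcal{A}F(X(s))\,\dif s$ as a sum of stochastic integrals of bounded, left-continuous (hence predictable) integrands against $(\mathcal{G}_t)$-martingales, and therefore a martingale itself.

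The main obstacle I expect is not the change-of-variable computation but the bookkeeping of adaptedness and intensities: one must verify that each time-changed counting process $N_k$ really has $(\mathcal{G}_t)$-intensity $\lambda_k(X(t))$, and not merely an intensity with respect to the minimal filtration to which $Y_k\circ(\text{random clock})$ is adapted. This is precisely what the carefully constructed filtration of Section \ref{section_set_up} together with the random-time-change theorem recalled in the appendix are designed to deliver, so that the bulk of the technical work of the proof reduces to citing those results rather than to the martingale identity itself.
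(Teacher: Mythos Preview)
Your existence and uniqueness argument is essentially the paper's: both iterate the ODE flow between jumps and use boundedness of $\sum_k\lambda_k$ to rule out explosion.

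For the martingale problem, however, you take a genuinely different route. The paper argues abstractly via the Ethier--Kurtz random-time-change framework: it splits $f$ into its positive and negative parts, introduces the trivial process $T(t)=t$ together with an auxiliary process $Z$ in $\rr^{p+2d}$ and a linear map $\Gamma$ with $X=\Gamma(Z)$, invokes \cite[Sec.~6.2]{EthierKurtz} to obtain that $Z$ solves a product-form martingale problem, and then pulls this back through $\Gamma$ to recover the generator \eqref{generator_pdmp}. Your argument is the direct one: write down the elementary change-of-variable formula for a piecewise-$C^1$ path with finitely many jumps, and compensate each counting integral using the fact (established in Section~\ref{section_set_up} via optional stopping, see \eqref{app_poisson_martingales}) that $N_k(t)-\int_0^t\lambda_k(X(s))\,\dif s$ is a $(\mathcal{G}_t)$-martingale. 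Your route is more elementary and has the pleasant side effect of yielding Lemma~\ref{lem_ito_formula} as a by-product rather than as a separate statement proved by citing PDMP literature; the paper's route, on the other hand, plugs directly into a general machinery and avoids having to check by hand that bounded predictable integrands against the compensated $N_k$ produce true (not merely local) martingales. Two small points worth making explicit in your write-up: the martingale problem is posed with respect to the natural filtration $\sF^X_t$, so after obtaining a $(\mathcal{G}_t)$-martingale you should note that $\sF^X_t\subseteq\mathcal{G}_t$ and apply the tower property; and the construction of $(\mathcal{G}_t)$ in Section~\ref{section_set_up} logically requires only the existence and uniqueness established in your first step, so there is no circularity.
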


We continue discussing immediate consequences of the above results. In particular we define a suitable filtration $(\mathcal{G}_t)_{t\geq 0}$ to which the solution $X$ is adapted. We define for arbitrary multi-indices $u=(u_1,\ldots,u_p)\in\rr_+^p$ the $\sigma$-fields
\begin{equation*}
\sF_u=\sigma(Y_k(s),s\leq u_k; k=1,\ldots,p)\vee\sigma(\mathcal{N}),
\end{equation*}
where $\mathcal{N}$ is the collection of all $\Pr$-null sets in $\sF$. Note, that the filtration is complete. For background on filtrations with respect to partially ordered sets and multi-parameter stopping times, we refer to \cite[Chap.~2.8]{EthierKurtz} which contains all subsequently necessary results. Clearly, the processes $Y_k$ are Poisson processes with respect to the multi-valued filtration $(\sF_u)_{u\in\rr_+^p}$. Then it holds due to the uniqueness of the solution to \eqref{random_time_change_eq} and the completeness of the filtration that for all $k$ and $t\geq 0$
\begin{equation}\label{definition_local_time}
 \tau_k(t):=\int_0^t\lambda_k(X(s))\,\dif s 
\end{equation}
is a stopping time with respect to the filtration $(\sF_u)_{u\in\rr_+^p}$, cf.~\cite[Thm.~2.2(a), Chap.~6]{EthierKurtz}. Furthermore, also the random times
\begin{equation*}
\tau_{j+}(t):=\int_0^t f_j(X(s))_{+}\,\dif s,\qquad \tau_{j-}(t):=\int_0^t f_j(X(s))_{-}\,\dif s
\end{equation*} 
are stopping times with respect to $(\sF_u)_{u\in\rr_+^p}$, where the subscripts $+$ and $-$ refer to the positive and negative parts of $f_j(X(s))$. Note that these stopping times are bounded due to the boundedness assumption on the coefficient functions. Next we define the multi-parameter stopping time $\tau(t)=\bigl(\tau_1(t),\ldots,\tau_p(t),\tau_{1+}(t),\ldots,\tau_{d+}(t),\ldots,\tau_{1-}(t),\tau_{d+}(t)\bigr)$ which then in turn defines a complete filtration 
\begin{equation*}
\mathcal{G}_t:=\sF_{\tau(t)}\,.
\end{equation*}
As stopping times and  c\`adl\`ag processes evaluated at stopping times are measurable with respect to the stopped $\sigma$-field it holds that $X$ is adapted to $(\mathcal{G}_t)_{t\geq 0}$, hence $\sF^X_t\subseteq\mathcal{G}_t$.
Finally, due to the Optional Stopping Theorem it follows that for each $k$ the process
\begin{equation}\label{app_poisson_martingales}
t\mapsto Y_k\Bigl(\int_0^t\lambda_k(X(s))\,\dif s\Bigr)-\int_0^t\lambda_k(X(s))\,\dif s
\end{equation}
is a $\mathcal{G}_t$-martingale.

\subsection{An It{\^o}-formula for Random Time Change Equations}

An important tool for the analysis of stochastic equations in order to obtain a type of Taylor expansion is an It{\^o}-formula. The following lemma contains an It{\^o}-formula for RTEs \eqref{random_time_change_eq}.

\begin{lemma}\label{lem_ito_formula} For every $F\in C^1_b(\rr^d)$ it holds almost surely that
\begin{eqnarray}\label{ito_formula}
\lefteqn{F(X(t))-F(X(0))\ =}\\[2ex]
&=&\int_0^t \nabla F[f](X(s))\,\dif s +\int_0^t\sum_{k=1}^p\lambda_k(X(s))\Bigl(F(X(s)+\nu_k)-F(X(s))\Bigr)\,\dif s + M^F(t)\,.\nonumber
\end{eqnarray}
The process $(M^F(t))_{t\geq 0}$, which depends on $F$, is a c\`adl\`ag martingale 
with respect to the filtration $(\mathcal{G}_t)_{t\geq 0})$. It satisfies $M^F(0)=0$ a.s., $\EX M^F(t)=0$ for all $t\geq 0$ and the second moments of its increments satisfy
\begin{equation}\label{def_quad_variation}
\EX|M^F(t)-M^F(s)|^2\,=\,\EX\int_s^{t}\sum_{k=1}^p\lambda(X(r))\,\big|F(X(r)+\nu_k)-F(X(r))\bigr|^2\,\dif r\qquad\forall\, 0\leq s\leq t\,.
\end{equation}
\end{lemma}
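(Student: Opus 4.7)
The idea is to decompose $F(X(t))-F(X(0))$ into contributions from the ODE flow on each interjump interval and contributions from the finitely many jumps in $[0,t]$, and then to compensate the jump part against the intensities. Let $0=\tau_0<\tau_1<\tau_2<\cdots$ denote the successive jump times of $X$; by boundedness of the rates they are a.s.~locally finite. On each interval $[\tau_{n-1},\tau_n)$ the process $X$ satisfies $\dot X=f(X)$, so $s\mapsto F(X(s))$ is continuously differentiable with derivative $\nabla F[f](X(s))$. Summing the continuous increments and the jump discontinuities yields pathwise
\begin{equation*}
F(X(t))-F(X(0))=\int_0^t\nabla F[f](X(s))\,\dif s+\sum_{\tau_n\leq t}\bigl(F(X(\tau_n))-F(X(\tau_n-))\bigr).
\end{equation*}

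Next I would rewrite the jump sum as a stochastic integral against the driving Poisson processes. By the construction underlying \eqref{random_time_change_eq}, a jump of $X$ of height $\nu_k$ at time $s$ corresponds precisely to a jump of the time-changed Poisson process $s\mapsto Y_k(\tau_k(s))$, where $\tau_k(s)=\int_0^s\lambda_k(X(r))\,\dif r$. Hence
\begin{equation*}
\sum_{\tau_n\leq t}\bigl(F(X(\tau_n))-F(X(\tau_n-))\bigr)=\sum_{k=1}^p\int_0^t\bigl(F(X(s-)+\nu_k)-F(X(s-))\bigr)\,\dif Y_k(\tau_k(s)).
\end{equation*}
Recalling from \eqref{app_poisson_martingales} that $\nbar M_k(t):=Y_k(\tau_k(t))-\tau_k(t)$ is a $\mathcal{G}_t$-martingale, inserting $\pm\lambda_k(X(s))\,\dif s$ inside this sum produces the claimed drift contribution together with the stochastic integral
\begin{equation*}
M^F(t):=\sum_{k=1}^p\int_0^t\bigl(F(X(s-)+\nu_k)-F(X(s-))\bigr)\,\dif\nbar M_k(s).
\end{equation*}

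To verify the asserted properties of $M^F$, I would observe that each integrand is left-continuous, $\mathcal{G}_t$-adapted (since $X$ is) and uniformly bounded by $2\|F\|_\infty$, while the compensators $\tau_k$ are bounded on $[0,t]$ by boundedness of $\lambda_k$. The standard theory for stochastic integrals against square-integrable martingales then delivers that $M^F$ is a c\`adl\`ag square-integrable $\mathcal{G}_t$-martingale with $M^F(0)=0$ and $\EX M^F(t)=0$. The second-moment identity follows from the It\^o isometry together with the orthogonality of the $\nbar M_k$: since the Poisson processes $Y_k$ are independent they share no common jumps almost surely, so $\langle\nbar M_j,\nbar M_k\rangle\equiv 0$ for $j\neq k$. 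Combining this with the predictable quadratic variation $\langle\nbar M_k\rangle_t=\tau_k(t)$ reproduces the formula \eqref{def_quad_variation}.

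The technically delicate point is the second step: converting a sum over random jump times into a compensated integral against $\dif Y_k(\tau_k(s))$ is only a $\mathcal{G}_t$-martingale statement thanks to the multi-parameter filtration $(\sF_u)_{u\in\rr_+^p}$ and the optional stopping argument already recorded in \eqref{app_poisson_martingales}. Everything else is routine It\^o-calculus bookkeeping enabled by the boundedness hypotheses on $F$, $\nabla F$ and the rates $\lambda_k$.
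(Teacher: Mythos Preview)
Your argument is correct, but it follows a genuinely different route from the paper. The paper's proof is essentially a two-line citation: it \emph{defines} $M^F$ as the remainder in \eqref{ito_formula}, observes that $F\in C^1_b(\rr^d)$ lies in the domain of the PDMP generator, and then invokes general Dynkin-type results for PDMPs from Jacobsen's monograph (Thm.~7.6.1 for the martingale property, Prop.~4.6.2 for the second-moment identity). In contrast, you work entirely from the RTE side: you decompose $F(X(t))-F(X(0))$ pathwise along interjump intervals, identify the jump contributions with stochastic integrals against the time-changed Poisson processes, and obtain the martingale property and the isometry directly from the compensated-Poisson calculus, using the $\mathcal{G}_t$-martingales \eqref{app_poisson_martingales} and their orthogonality. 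Your approach has two advantages: it yields an explicit representation of $M^F$ as a sum of compensated Poisson integrals (useful if one later wants finer estimates), and it works natively with the filtration $(\mathcal{G}_t)_{t\geq 0}$ that appears in the statement, whereas the paper's citation delivers the martingale property only with respect to the natural filtration $\sF^X_t$ and does not spell out the passage to $\mathcal{G}_t$. The paper's approach, on the other hand, is shorter and leverages the PDMP equivalence from Theorem~\ref{theorem_equivalence} so that no additional stochastic-integration machinery has to be set up.
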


\begin{proof} First we take \eqref{ito_formula} as a definition of the c\`adl\`ag process $M^F$ which implies $M^F(0)=0$ almost surely. Next, note that the assumption $F\in C^1_b(\rr^d)$ implies that $F$ is in the domain of the generator of the process and thus $M^F$ is a martingale with respect to the natural filtration \cite[Thm.~7.6.1]{Jacobsen}. Hence, $\EX M^F(t)=0$ for all $t\geq 0$ follows and, in particular, the second moments of the increments \eqref{def_quad_variation} follow from \cite[Prop.~4.6.2]{Jacobsen}. 
\end{proof}

\begin{remark} In this study we use the It{\^o} formula \eqref{ito_formula} for vector-valued functions $F=(F_1,\ldots,F_d)$ $\in C^1_b(\rr^d,\rr^d)$  without specific distinction in notation. In this case \eqref{ito_formula} is simply employed component-wise such that $\mathcal{A}F=(\mathcal{A}F_1,\ldots,\mathcal{A}F_d)$ and $M^F=(M^{F_1},\ldots,M^{F_d})$.
\end{remark}


\section{Approximation methods for Random Time Change Equations}\label{section_conv_theorems}

In this section we present the general class of one-step approximation methods for RTEs \eqref{random_time_change_eq}. Our aim is to approximate the solution $(X(t))_{t\geq 0}$ on an equidistant grid with step-size $h$ over a fixed time interval $[0,T]$ by random variables $\nhat X_n$, $n=1,\ldots,\nbar n$, with $T=h\nbar n$, such that $X(t_n)\approx \nhat X_n$ for $t_n=nh$. Then for $h\to 0$ these methods should converge strongly to the exact solution, that is for $h\to 0$ the global error \eqref{def_strong_global_error} vanishes. In analogy to (J)SDEs we first define the generic class of semi-implicit one-step methods.

\begin{definition} A \emph{semi-implicit one-step method} is of the generic form $\nhat X_0= X(0)$ and the iteration
\begin{equation}\label{def_semi_impl_meth}
\nhat X_{n+1}\,=\, \nhat X_n+h\phi_1(\nhat X_n,\nhat X_{n+1},h)+\phi_2(\nhat X_n,h,\nhat\tau(t_n);Y),
\end{equation}
where $\nhat \tau(t)=(\nhat \tau_1(t),\ldots,\nhat \tau_p(t))$ denotes the approximation of local times \eqref{definition_local_time} for the driving Poisson processes $Y$ which are given by $\nhat \tau_k(0)=0$ and the iterations
\begin{equation}\label{def_semi_impl_meth_2}
\nhat\tau_k(t_{n+1})=\nhat\tau_k(t_n)+h\phi_3(k,\nhat X_n,h)\,.
\end{equation}
Here $\phi_1$ and $\phi_2$ are used to approximate the drift and the jump part of the RTE and $\phi_3$ approximates the integral in the time change. A method is called \emph{explicit} if $\phi_1$ does not depend on $\nhat X_{n+1}$. 
If $\phi_2$ in \eqref{def_semi_impl_meth} is of the form
\begin{equation}\label{def_maruyama_term}
\phi_2(\nhat X_n,h,\nhat\tau(t_n);Y)=\sum_{k=1}^p \biggl(Y_k\bigl(\nhat\tau_k(t_n)+h\phi_3(k,\nhat X_n,h)\bigr)-Y_k\bigl(\nhat\tau_k(t_n)\bigr)\Biggl)\,\nu_k
\end{equation}
then the method is called a \emph{Maruyama-type method}. We note that for numerically solving the RTE \eqref{random_time_change_eq} in actual implementations Maruyama-type methods are straight-forward to implement. The strong Markov property of the Poisson processes implies that the Poissonian increments $Y_k(\nhat \tau_k(t_{n+1}))-Y_k(\nhat \tau_k(t_n))$ are conditionally independently Poisson distributed with rate $\phi_3(k,\nhat X_n,h)$ as the times $\nhat\tau(t_n)$ are stopping times. Hence simulating one path of \eqref{random_time_change_eq} necessitates only the simulation of $p\nbar n$ Poisson random variables.
\end{definition}

\begin{remark} In the definition of the method \eqref{def_semi_impl_meth} and \eqref{def_semi_impl_meth_2} the function $\phi_1$ is an increment function of a (possibly implicit) one-step method for ordinary differential equations, and the fact that $\nhat\tau_k(t_n)$ is an approximation of the integral \eqref{definition_local_time}, i.e.,
\begin{equation*}
\int_{t_n}^{t_{n}+h}\lambda_k(X(s))\,\dif s\approx h\phi_3(k,\nhat X_n,h)\,,
\end{equation*} 
suggests that $\phi_3$ is an increment function of a deterministic one-step quadrature scheme. Finally, in \eqref{def_semi_impl_meth} the term $\phi_2$ generally refers to an approximation of stochastic integral terms as in (J)SDEs, e.g., a Maruyama-type term \eqref{def_maruyama_term} or in a future extension a Milstein-type term to obtain higher strong order methods.
\end{remark}

In the remainder of the study we focus on Maruyama-type methods, thus we always assume that $\phi_2$ is of the form \eqref{def_maruyama_term}. For concrete examples of such methods we refer to Section \ref{section_theta_method}, where $\Theta$-methods are considered and proceed with the discussion that generally allow for an error analysis of such methods. Along the lines of standard procedures in numerical analysis of differential equations we proceed deriving convergence from an analysis of the local error of a method, i.e., the error incurred over a single approximation step. To this end we first define the \emph{strong local errors} 
\begin{equation}\label{local_error_drift}
L_{n+1}:=\int_{t_n}^{t_{n+1}} f(X(s))\,\dif s - h\phi_1(X(t_n),X(t_{n+1}),h)
\end{equation}
and 
\begin{equation}\label{local_error_local_time}
K_{n+1}:=\sum_k \Bigl(\int_{t_n}^{t_{n+1}}\lambda_k(X(s))\,\dif s -h\phi_3(k,X(t_n),h)\Bigr)\,\nu_k\,.
\end{equation}
We say a numerical method is \emph{strongly consistent} if for some $q>0$ the strong local errors satisfy
\begin{equation*}
\max_{n=1,\ldots,\nbar n} \EX|L_n| = \landau(h^{1+q}),\qquad \max_{n=1,\ldots,\nbar n} \EX|K_n| = \landau(h^{1+q})\,.
\end{equation*}

\begin{remark} The errors $L_{n+1},\, K_{n+1}$ are the errors arising in one approximation step starting from the exact solution when approximating the drift and the local time of the Poisson processes, respectively. The current concept of strong consistency is closely related to the usual consistency condition in stochastic and deterministic numerical analysis. There it is normally assumed that the `classical' local error, i.e., the one-step approximation
\begin{equation*}
L(X(t_n),h):=X(t_n+h)-X(t_n)-h\phi_1(X(t_n),X(t_n+h),h)+\phi_2(X(t_n),h,\tau(t_n);Y),
\end{equation*}
satisfies $\EX |L(X(t_n),h)|=\landau(h^{q+1})$. However, it is easily observed that in the present case it holds for Maruyama-type methods that $\EX |L(X(t_n),h)|\leq \EX |L_{n+1}|+\EX|K_{n+1}|$.
\end{remark}

The subsequent central theorem contains the step from the local to the global error. Thus it allows to conclude from consistency of the numerical method to convergence.

\begin{theorem}[\textnormal{Convergence Theorem for Maruyama-type Methods}]\label{convergence_theorem} Under the Lipschitz conditions
\begin{eqnarray}
\big|\phi_1(x_1,y_1,t,h)-\phi_1(x_2,y_2,t,h)\big|&\leq& L_1|x_1-x_2|+L_2|y_1-y_2|\label{lipschitz_cond_1}\\[2ex]
\big|\phi_3(k,x_1,h)-\phi_3(k,x_2,h)\big|&\leq& L_{3,k}\,|x_1-x_2|\quad\forall\,k=1,\ldots,p,\label{lipschitz_cond_2}
\end{eqnarray}
and, if the method is implicit, the step-size restriction 
\begin{equation}\label{step_size_restriction}
h\,<\,\frac{1}{L_2} 
\end{equation}
it holds that a Maruyama-type method \eqref{def_semi_impl_meth} -- \eqref{def_maruyama_term} possesses a unique solution a.s.~and is convergent in the strong sense, if it is strongly consistent.\medskip

In particular, if $X(0)=\nhat X(0)$, then the \emph{strong global error} satisfies
\begin{equation}\label{strong_stability_estimate}
\max_{n=1,\ldots,\nbar n}\EX|X(t_n)-\nhat X_n|\ \leq\ S\,h^{-1}\max_{n=1,\ldots,\nbar n}\bigl(\EX|L_{n}|+\EX|K_n|\bigr)
\end{equation}
for a suitable constant $S>0$ independent of the initial condition. 
\end{theorem}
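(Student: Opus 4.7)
My strategy is to derive a telescoped identity for the one-step error, control the compensated Poisson contributions through a linear-in-expectation estimate, and close the argument with a discrete Gronwall inequality. The factor $h^{-1}$ in \eqref{strong_stability_estimate} will emerge simply from summing $\nbar n = T/h$ local errors.

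First I would dispose of existence and uniqueness of $\nhat X_{n+1}$. In the explicit case there is nothing to do; in the implicit case I observe that $\phi_2(\nhat X_n,h,\nhat\tau(t_n);Y)$ is independent of $\nhat X_{n+1}$, since $\nhat\tau_k(t_{n+1})=\nhat\tau_k(t_n)+h\phi_3(k,\nhat X_n,h)$ is already determined by $\nhat X_n$ through \eqref{def_semi_impl_meth_2}. Hence $\nhat X_{n+1}$ is the fixed point of $z\mapsto \nhat X_n+h\phi_1(\nhat X_n,z,h)+\phi_2(\nhat X_n,h,\nhat\tau(t_n);Y)$, which is a contraction of modulus $hL_2<1$ by \eqref{lipschitz_cond_1} and \eqref{step_size_restriction}. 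Banach's fixed point theorem supplies a unique $\nhat X_{n+1}$ pathwise.

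Next I would derive the error recursion. With $e_n:=X(t_n)-\nhat X_n$ and the compensated Poisson martingales $\tilde Y_k(u):=Y_k(u)-u$, subtracting \eqref{def_semi_impl_meth}--\eqref{def_maruyama_term} from the integrated form of \eqref{random_time_change_eq} yields
\begin{align*}
e_{n+1}-e_n &= L_{n+1}+h\bigl[\phi_1(X(t_n),X(t_{n+1}),h)-\phi_1(\nhat X_n,\nhat X_{n+1},h)\bigr] + K_{n+1}\\
&\quad + h\sum_{k=1}^p\bigl[\phi_3(k,X(t_n),h)-\phi_3(k,\nhat X_n,h)\bigr]\nu_k + \Delta M_{n+1},
\end{align*}
where $\Delta M_{n+1}$ groups the Poisson terms $\sum_k\nu_k\bigl[(\tilde Y_k(\tau_k(t_{n+1}))-\tilde Y_k(\tau_k(t_n)))-(\tilde Y_k(\nhat\tau_k(t_{n+1}))-\tilde Y_k(\nhat\tau_k(t_n)))\bigr]$. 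Summing from $j=0$ to $n-1$ collapses these increments into the single terminal term $\sum_k \nu_k\bigl[\tilde Y_k(\tau_k(t_n))-\tilde Y_k(\nhat\tau_k(t_n))\bigr]$.

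The central technical estimate is the linear bound $\EX|\tilde Y_k(u)-\tilde Y_k(v)|\le 2\,\EX|u-v|$ valid for $\sF_u$-stopping times $u,v$, which I would obtain from the pointwise domination $|\tilde Y_k(u)-\tilde Y_k(v)|\le Y_k(u\vee v)-Y_k(u\wedge v)+|u-v|$ combined with optional stopping applied to the martingale $t\mapsto Y_k(t)-t$. Iterating \eqref{def_semi_impl_meth_2} and applying \eqref{lipschitz_cond_2} then gives $\EX|\tau_k(t_n)-\nhat\tau_k(t_n)|\le \sum_{j<n}\bigl(C_k\,\EX|K_{j+1}|+hL_{3,k}\EX|e_j|\bigr)$ for a constant $C_k$ depending on $\{\nu_\ell\}$. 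Inserting these estimates together with \eqref{lipschitz_cond_1} into the $L^1$-norm of the telescoped identity produces a recursion of the form
\begin{equation*}
(1-hL_2)\,\EX|e_n| \le A_n + C\,h\sum_{j=0}^{n}\EX|e_j|,
\end{equation*}
with $A_n\le \nbar n\,\max_j(\EX|L_j|+\EX|K_j|)=Th^{-1}\max_j(\EX|L_j|+\EX|K_j|)$. Since $hL_2<1$ a discrete Gronwall inequality converts this into \eqref{strong_stability_estimate}. The main obstacle is precisely the linear-in-$\EX|u-v|$ control of the compensated Poisson differences: the naive Cauchy--Schwarz bound $\EX|\tilde Y_k(u)-\tilde Y_k(v)|\le(\EX|u-v|)^{1/2}$ would produce a spurious square root that destroys the strong order of convergence, and it is the pathwise domination by $Y_k(u\vee v)-Y_k(u\wedge v)$ that allows local consistency of order $1+q$ to be upgraded to global order $q$ via the $h^{-1}$ prefactor.
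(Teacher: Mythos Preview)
Your argument is correct and follows essentially the same route as the paper: telescoping, optional stopping for the Poisson terms at the stopping times $\tau_k(t_n)$ and $\nhat\tau_k(t_n)$, the Lipschitz bounds, and a discrete Gronwall inequality. The one substantive difference is that you pass through the \emph{compensated} processes $\tilde Y_k$, which forces you to carry the compensator contributions $K_{j+1}$ and $h\sum_k[\phi_3(k,X(t_j),h)-\phi_3(k,\nhat X_j,h)]\nu_k$ separately and then bound the martingale part by $\EX|\tilde Y_k(u)-\tilde Y_k(v)|\le 2\,\EX|u-v|$. The paper instead works directly with $Y_k$ and uses monotonicity to get the exact identity $\EX|Y_k(u)-Y_k(v)|=\EX\bigl(Y_k(u\vee v)-Y_k(u\wedge v)\bigr)=\EX|u-v|$; this avoids the compensator split entirely and yields a slightly smaller constant $S$, but the logic is the same. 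One point the paper makes explicit that you pass over is the verification (by induction) that each $\nhat\tau(t_n)$ is an $(\sF_u)$--stopping time and that $\nhat X_n$ is $\sF_{\nhat\tau(t_n)}$--measurable; you invoke optional stopping for these times, so this measurability step should be stated.
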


\begin{remark}\label{remark_stability_theorem} 
For a more detailed convergence analysis we are also interested in the asymptotic of the pre-factor $S$ in the stability estimates. Thus we note that the constant $S$ in \eqref{strong_stability_estimate} depends exponentially on the Lipschitz constants of the coefficients, i.e., $S=\landau\bigl(\exp\bigl(L_1+L_2+\sum_k L_{3,k}|\nu_k|\bigr)\bigr)$. 
\end{remark}

\begin{proof}[Proof of Theorem \ref{convergence_theorem}] If the method is semi-implicit we assume the step-size restriction \eqref{step_size_restriction} is satisfied, otherwise simply set $L_2=0$ in the subsequent proof and no step-size restriction is needed. In the following we first show (a) that the approximation method possesses a.s.~a unique solution and (b) that the random times $\nhat\tau(t_n)$ are stopping times. Then in part (c) we derive the stability estimate \eqref{strong_stability_estimate}. \medskip

\quad (a)\ We first consider the existence of a unique solution to the iteration scheme \eqref{def_semi_impl_meth}. If the numerical method is explicit, then this is trivially the case. In the case of a semi-implicit method the existence is proven under the step-size restriction \eqref{step_size_restriction} using a fixed point argument. For almost all $\omega\in\Omega$ the Poisson-increments are finite and given $\nhat X_n$ the next iterate $\nhat X_{n+1}$ is defined via the fixed-point equation
\begin{eqnarray*}
\nhat X_{n+1} &=& \nhat X_n+h\phi_1(\nhat X_n,\nhat X_{n+1},h)+\sum_{k=1}^p\Bigl(Y_k\bigl(\nhat\tau_k(t_n)+h\phi_3(k,\nhat X_n,h)\bigr)-Y_k\bigl(\nhat\tau_k(t_n)\bigr)\Bigr)\nu_k\\
&:=&\Gamma(\nhat X_{n+1};\nhat X_n,h,\Delta Y_n)\,,
\end{eqnarray*}
where $\Delta Y_n$ denote the Poissonian increments. In order to establish the existence of a unique fixed-point it remains to show that $y\mapsto\Gamma(y;x,h,w)$ is a contractive map for all sufficiently small $h$ uniformly in $x$ and $w$. The Lipschitz condition \eqref{lipschitz_cond_1} yields $|\Gamma(y_1;x,h,w)-\Gamma(y_2;x,h,w)|\ \leq\ hL_2|y_1-y_2|$ and hence $\Gamma$ is contractive under the step-size restriction \eqref{step_size_restriction}. 
\medskip

\quad (b)\ In this part of the proof we show that the random times $\nhat\tau_k(t_n)$ are $(\sF_u)_{u\in\rr_+^p}$-stopping times. We proceed by induction. First, it holds trivially that $\nhat\tau(t_0):=0$ is a stopping time and $\nhat X(0)$ is $\sF_{\nhat\tau(t_0)}$-measurable. Assume for $n\geq 1$ that $\nhat\tau_k(t_{n-1})$ is a stopping time and that $\nhat X(\nhat\tau(t_{n-1}))$ is $\sF_{\nhat\tau(t_{n-1})}$-measurable. Then the components $\nhat\tau(t_n)$ are defined as $\nhat\tau_k(t_n)=\nhat\tau_k(t_{n-1})+\phi_3(k,\nhat X_{n-1},h)$. As $\phi_3(k,\cdot,h)$ is a measurable function it follows that $\nhat\tau_k(t_n)$ is a stopping time. Then, it clearly holds that $\nhat X_{n}$ is $\sF_{\nhat\tau(t_{n})}$-measurable due to part (a) of the proof.
\medskip

\quad (c)\ In this part of the proof we derive the stability estimate \eqref{strong_stability_estimate}. Then inserting the definition of strong consistency immediately shows convergence of the method. Recursively inserting the method \eqref{def_semi_impl_meth} into its right hand side we obtain
\begin{eqnarray}
\nhat X_{n+1}&=& X_n + h\phi_1(\nhat X_n,\nhat X_{n+1},h)+\sum_{k=1}^p \Bigl( Y\bigl(\nhat \tau_k(t_{n+1})\bigr)-Y_k\big(\nhat\tau_k(t_n)\bigr)\Bigr)\nu_k\nonumber\\
&=& \nhat X_{n-1} +h\sum_{i=n-1}^n \phi_1(\nhat X_i,\nhat X_{i+1},h)+ \sum_{k=1}^p \Bigl( Y\bigl(\nhat \tau_k(t_{n+1})\bigr)-Y_k\big(\nhat\tau_k(t_{n-1})\bigr)\Bigr)\nu_k\nonumber\\
&=& \nhat X_0 + h\sum_{i=0}^n \phi_1(\nhat X_i,\nhat X_{i+1},h)+ \sum_{k=1}^p \Bigl( Y\bigl(\nhat \tau_k(t_{n+1})\bigr)-Y_k\big(\nhat\tau_k(t_0)\bigr)\Bigr)\nu_k\label{approx_method_expansion}
\end{eqnarray}
Thus subtracting \eqref{approx_method_expansion} from the exact process \eqref{random_time_change_eq} using that $\tau_k(t_0)=\nhat\tau_k(t_0)=0$ as $t_0=0$ and that $\nhat X_0=X(0)$ yields
\begin{eqnarray}
X(t_{n+1})-\nhat X_{n+1}&=& 
 h\sum_{i=0}^n \Bigl(\int_{t_i}^{t_{i+1}}f(X(s))\,\dif s-\phi_1(\nhat X_i,\nhat X_{i+1},h)\Bigr)\nonumber\\
&&\mbox{} +\sum_{k=1}^p \Bigl(Y_k\big(\tau_k(t_{n+1})\bigr)- Y\bigl(\nhat \tau_k(t_{n+1})\bigr)\Bigr)\nu_k\,.
\end{eqnarray}
Adding and subtracting the one-step approximations, using the notation \eqref{local_error_drift} and estimating the modulus with the triangle inequality yields
\begin{eqnarray}\label{strong_conv_prpoof_ineq_before_expect}
|X(t_{n+1})-\nhat X_{n+1}|&\leq& 
h\sum_{i=0}^{n}\Big|\phi_1(X(t_i),X(t_{i+1}),h)-\phi_1(\nhat X_i,\nhat X_{i+1},h)\Big|\nonumber\\
&&\mbox{}  +\sum_{k=1}^p \Big|Y_k\bigl(\tau_k(t_{n+1})\bigr)- Y\bigl(\nhat \tau_k(t_{n+1})\bigr)\Big|\,|\nu_k|+\sum_{i=0}^n |L_{i+1}|\,.
\end{eqnarray}
In the next step of the proof we take expectations on both sides of inequality \eqref{strong_conv_prpoof_ineq_before_expect}. We now first consider the Poisson process terms in more detail. The times $\tau_k(t_n)$ and $\nhat\tau_k(t_n)$ are stopping times with respect to the filtration $\sF_u$, see Section \ref{section_set_up} and part (b) of the proof, respectively. So, also the minimum $\tau_k(t_n)\wedge\nhat\tau_k(t_n)$ and the maximum $\tau_k(t_n)\vee\nhat\tau_k(t_n)$ are stopping times. Therefore, using that Poisson processes are non-decreasing and employing the Optional Stopping Theorem we obtain
\begin{eqnarray*}
\EX\Big|Y_k\big(\tau_k(t_{n+1})\bigr)- Y\bigl(\nhat \tau_k(t_{n+1})\bigr)\Big|&=&\EX\Bigl(Y_k\bigl(\tau_k(t_n)\vee\nhat\tau_k(t_n)\bigr)- Y\bigl(\tau_k(t_n)\wedge\nhat\tau_k(t_n)\bigr)\Bigr)\\[1ex]
&=&\EX\Bigl(\tau_k(t_n)\vee\nhat\tau_k(t_n)-\tau_k(t_n)\wedge\nhat\tau_k(t_n)\Bigr)\\[1ex]
&=&\EX\big|\tau_k(t_n)-\nhat\tau_k(t_n)\big|\,.
\end{eqnarray*}
Thus, the application of expectation to both sides of \eqref{strong_conv_prpoof_ineq_before_expect} and the addition the one-step approximation for the local time using the notation \eqref{local_error_local_time} yields the estimate
\begin{eqnarray*}
\lefteqn{\phantom{x}\hspace{-1cm}
\EX|X(t_{n+1})-\nhat X_{n+1}| }\nonumber\\
&\leq& 
h\sum_{i=0}^{n}\EX\Big|\phi_1(X(t_i),X(t_{i+1}),h)-\phi_1(\nhat X_i,\nhat X_{i+1},h)\Big|\nonumber\\
&&\mbox{}  +\sum_{k=1}^p \EX\Big|\int_0^{t_{n+1}} \lambda_k(X_s)\,\dif s - h\sum_{i=0}^n \phi_3(k,\nhat X_i,h)\Big|\,|\nu_k|+\sum_{i=0}^n \EX|L_{i+1}|\nonumber\\
&\leq& 
h\sum_{i=0}^{n}\EX\Big|\phi_1(X(t_i),X(t_{i+1}),h)-\phi_1(\nhat X_i,\nhat X_{i+1},h)\Big|\nonumber\\
&&\mbox{}  +h\sum_{k=1}^p |\nu_k|\Bigl(\sum_{i=0}^n \EX\Big|\phi_3(k,X(t_i),h)-\phi_3(k,\nhat X_i,h)\Big|\Bigr)+\sum_{i=0}^n \bigl(\EX|L_{i+1}|+\EX|K_{i+1}|\bigr)\,.
\end{eqnarray*}
We set $M:=h^{-1}\max_{i=0,\ldots\nbar n-1}\bigl(\EX|L_{i+1}|+\EX|K_{i+1}|\bigr)$ and $E_i:=\EX|X(t_i)-\nhat X_i|$ with $E_0:=0$ and obtain for all $n=0,\ldots \nbar n-1$ using the Lipschitz conditions \eqref{lipschitz_cond_1} and \eqref{lipschitz_cond_2} that
\begin{eqnarray*}
E_{n+1}&\leq& M+h(L_1+L_3)\sum_{i=0}^n E_i + hL_2\sum_{i=0}^n E_{i+1},
\end{eqnarray*}
where $L_3:=\sum_{k=1}^p L_{3,k}|\nu_k|$. Rearraging the right hand side in this inequality yields 
\begin{eqnarray*}
E_{n+1}&\leq& \frac{M}{1-hL_2}+\frac{h(L_1+L_2+L_3)}{1-hL_2}\sum_{i=0}^n E_i\,.
\end{eqnarray*}
We apply a discrete version of Gronwall's Lemma\footnote{Let $a_n$, $n\geq 0$, be a non-negative sequence, $c\in\rr$. If a real sequence $x_n$ satisfies for all $n\geq 0$ the inequality
\begin{equation*}
x_{n+1}\leq c+\sum_{i=0}^n a_ix_i
\end{equation*}
then it holds that
\begin{equation*}
x_{n+1}\leq (c+a_0x_0)\prod_{i=1}^n(1+a_i)\,.
\end{equation*}}
which results in the estimate
\begin{equation*}
E_{n+1}\,\leq\,\Bigl(\frac{M+h(L_1+L_2+L_3)E_0}{1-hL_2}\Bigr)\,\Bigl(1+\frac{h(L_1+L_2+L_3)}{1-hL_2}\Bigr)^n\,.
\end{equation*}
Finally, as $E_0=0$ we obtain for all $h\leq h^\ast\leq 1/L_2$ the estimate
\begin{equation}
E_{n+1}\,\leq\,\frac{M}{1-hL_2}\,\exp\Bigl(\frac{T(L_1+L_2+L_3)}{1-hL_2}\Bigr)\,\leq\, C\,\e^{CTL}\,M\,,
\end{equation}
where $L=L_1+L_3+L_3$ and the constant $C=(1-h^\ast L_2)^{-1}>1$ can be chosen as $C=1$ in the case of an explicit method. The proof of the stability estimate \eqref{strong_stability_estimate} is completed and strong convergence follows immediately from the strong consistency of the method.

\end{proof}

\section{Error analysis of the $\Theta$-Maruyama-methods}\label{section_theta_method}

In this section we present a detailed analysis of the global error of $\Theta$-Maruyama methods applying the convergence theorems established in the previous section. In practical applications often a scaling is inherent in the mathematical model, which is particular important for a practical relevant error analysis in chemical reaction systems. Taking the scaling into account in the error analysis resembles closely the analysis of numerical methods for small noise stochastic differential equations. Thus after presenting the $\Theta$-Maruyama methods and before stating the precise results we discuss this inherent scaling in a separate subsection.

\medskip

\begin{definition} The \emph{$\Theta$-Maruyama method} for step-sizes $h>0$ and a parameter $\theta\in[0,1]$ is given by $\nhat X_0=X_0$ and
\begin{equation}\label{def_theta_method}
\nhat X_{n+1}=\nhat X_n+h\theta f(\nhat X_{n+1})+h(1-\theta)f(\nhat X_n) +\sum_{k=1}^p \Bigl(Y_k\bigl(\nhat \tau_k(t_{n+1})\bigr)-Y_k\bigl(\nhat \tau(t_n)\bigr)\Bigr)\,\nu_k
\end{equation}
with $\nhat \tau(0)=0$ and
\begin{equation} 
\nhat \tau_k(t_{n+1})\, =\, \nhat\tau_k(t_{n})+h\,\phi_3^{\tn{method}}(k,\nhat X_n,h)\,,
\end{equation}
where $\phi_3^{\tn{method}}$ is the increment function of a suitable quadrature rule. The method \eqref{def_theta_method} is explicit if $\theta=0$ and then called \emph{explicit Euler method}. It is called \emph{implicit Euler method} in case of $\theta=1$ and \emph{trapezoidal method} for $\theta=\tfrac{1}{2}$. Reasonable choices for $\phi_3^{\tn{method}}$ are, e.g., the \emph{Euler quadrature rule}
\begin{eqnarray}\label{quad_euler}
\phi_3^{\tn{euler}}(k,x,h)& =& \lambda_k(x),
\end{eqnarray}
the \emph{midpoint quadrature rule}
\begin{eqnarray}
\phi_3^{\tn{mid}}(k,x,h) &=& \lambda_k\Bigl(x+\frac{h}{2}f(x)+\frac{h}{2}\sum_{j=1}^p \lambda_j(x)\,\nu_j\Bigl)\label{quad_midpoint}
\end{eqnarray}
and the \emph{trapezoidal quadrature rule}
\begin{eqnarray}
\phi_3^{\tn{trap}}(k,x,h) &=& \frac{1}{2}\,\lambda_k(x)+\frac{1}{2}\lambda_k\Bigl(x+h f(x)+h\sum_{j=1}^p \lambda_j(x)\,\nu_j\Bigl)\,.\label{quad_trapez}
\end{eqnarray}
\end{definition}

\begin{remark} The quadrature rules \eqref{quad_euler} -- \eqref{quad_trapez} may in general be combined arbitrarily with the scheme \eqref{def_theta_method} and any choice of $\theta$. However, as we will find in the error analysis, see Theorems \ref{theorem_theta_method_strong}, it is advantageous to combine deterministic higher order methods. That is, in the case $\theta=\tfrac{1}{2}$, which is an order two methods for ODEs, one should combine the $\Theta$-method \eqref{def_theta_method} with either the midpoint \eqref{quad_midpoint} or trapezoidal rule \eqref{quad_trapez} which are second order quadrature rules.
\end{remark}

\subsection{Analysis of scaled systems and relationship to small noise SDEs}\label{section_scaled_system}

In many applications an asymptotic error analysis with respect to the time-step may not, though mathematically sound, give a detailed enough picture in practical fixed time step situations. When the time step becomes asymptotically small one always enters the range wherein jump-adapted methods should be used and thus fixed time step methods will be employed for rather large time steps only. 
It is a central idea in the studies \cite{AndGanKurtz,AndersonHigham,AndersonKoyama} to introduce a scaling parameter in the underlying equations, which is often naturally given, e.g., as a system size parameter in chemical reaction systems. This scaling parameter is then also included in the error estimates and it naturally relates to the regime when tau-leaping methods are efficiently applicable.
This idea is closely related to small noise analysis of numerical methods which was carried out in the case of stochastic differential equations or jump diffusions \cite{BuckwarRiedlerJSDEs,BuRoWi,MilsteinSmallNoise}. In these studies it was found that in practice certain methods which are of the same order of convergence are particularly useful in different regimes of step-size to noise relations. 

In the following we use $N$ to denote the scaling parameter setting $X^N_i=N^{-\alpha_i}X_i$ to naturally scale the system, where usually $N\gg 1$. Here we choose $\alpha_i>0$ such that the resulting variables satisfy $X^N_i=\landau(1)$ for all $i=1,\ldots, d$. This yields the generic scaled RTE
\begin{equation}\label{scaled_random_time_change_eq}
X^N(t)=X^N(0)+N^\eta\int_0^t f^N(X^N(s))\,\dif s + \sum_{k=1}^p Y_k\biggl(N^\gamma \int_0^t N^{c_k} \lambda_k^N(X^N(s))\,\dif s\biggr)\nu^N_k\, ,
\end{equation}
where $\nu^N_k=\nu_k\cdot(N^{-\alpha_1},\ldots,N^{-\alpha_d})\in\rr^d$ and $\cdot$ denotes the entrywise product of two vectors. Further, the constants $\eta,\,\gamma$ and $c_k$ are also chosen such that the coefficients $f^n$, $\lambda_k^N$ are of order one. We note that it holds $|\nu^N_k|=\landau(N^{-c_k})$, where, however, $|\nu_k^N|\propto N^{-\rho_k}\ll N^{-c_k}$ is possible, i.e., $c_k\leq \rho_k$. Finally, we set $\rho=\min_{k=1,\ldots,p}\,\rho_k$. The notation here follows \cite{AndersonKoyama,AndersonHigham} for reasons of comparability as our results reduce to theirs in the case of $f\equiv0$, or formally $\eta=-\infty$. The appearance of the scaling in chemical reactions system in general and for particular examples is discussed in detail in \cite{AndGanKurtz,AndersonKoyama} and we do not want to repeat this discussion at present.

Here we comment on the relationship of the analysis of the scaled equation \eqref{scaled_random_time_change_eq} to small noise stochastic equations. In small noise equations the noise coefficients are characterised to be proportional to a parameter $\eps\ll 1$ measuring the `smallness' of the noise. The behaviour of the error of an approximation method is then characterised by different areas in the step size-to-noise parameter regime. For example, for step sizes $h\gg \eps$ the error in the drift approximation is dominant and deterministically higher order methods yield a smaller overall error. In the present setting smallness of the noise would translate to the product of the jump intensities and  the jump heights being of order $\eps\ll 1$, and the drift part being of order one, i.e., $\eta\approx 0$. Therefore, if it holds that $\gamma<0$ and hence $N^\gamma$ is actually small, by setting $\eps:=N^\gamma$ the scaled equation is analogous to a small noise equation and the analysis in the present study corresponds to small noise analysis. 
We note that the subsequent error estimates are most appropriate when these assumptions are satisfied. Considering as an application scaled hybrid chemical reaction systems, then the assumption $\gamma\leq 0$ is often satisfied \cite{AndersonHigham,AndersonKoyama}. Furthermore as in these models the drift part of the equation \eqref{scaled_random_time_change_eq} usually arises as taking reactions on a much faster time scale to their deterministic limit, i.e., the corresponding reaction rate equations, see \cite{Debussche} for a precise mathematical limit theorem, it is also reasonable to expect that this limit yields $N^\eta f=\landau(1)$, i.e., $\eta\approx 0$. For an example illustrating these discussions we refer to Section \ref{section_numerical_examples}.

\subsection{Main results}

We now state the convergence results for the $\Theta$-Maruyama method \eqref{def_theta_method} -- \eqref{quad_trapez}, the detailed and tedious calculations involved in the analysis of the local error are deferred to the subsequent section. We assume that the equation \eqref{random_time_change_eq} satisfies a scaling as detailed in Section \ref{section_scaled_system}, i.e., \eqref{random_time_change_eq} is of the form \eqref{scaled_random_time_change_eq}, and we use the scaling constants in the asymptotic expansion of the error. If one is interested in unscaled results simply set $N=1$. We repeat that the following results are meaningful in the sense that they contain the leading error terms under the step size restriction $h<(N^{\gamma}+N^\eta)^{-1}$ only. As detailed before it is mostly the case that $1\ll N$ and $\eta,\gamma\leq 0$ in which case this step-size condition is usually satisfied. Moreover, the stability constant depends exponentially on the Lipschitz constants of the increment functions, see Remark \ref{remark_stability_theorem}, which are presently given by the Lipschitz constants of the coefficients $f,\lambda_k$. Hence, the stability constant grows exponentially in $N^\gamma+N^\eta$. Thus, assuming $\gamma,\eta\leq 0$ also avoids dealing with absurdly large stability constants. Finally, we note that in the case of a pure jump process, i.e., $f\equiv 0$, our results reduce to the results presented in \cite{AndGanKurtz,AndersonHigham} and in the case of state-independent jump rates, i.e., $\lambda_k\equiv const.$, our results are analogous to the error analysis of jump-diffusions, see, e.g., \cite{PlatenLiberati}.

\begin{theorem}\label{theorem_theta_method_strong} The global strong error of the $\Theta$-Maruyama-method satisfies in combination with the Euler quadrature scheme
\begin{equation*}
\max_{n=1,\ldots,\nbar n}\EX|X(t_n)-\nhat X_n|\, =\, \landau\bigl(h^{1/2}\,N^{(\gamma-\rho)/2}(N^\gamma+N^\eta)+h\,(N^\gamma+N^\eta)^2\bigr).
\end{equation*}
The trapezoidal rule, i.e., $\theta=\tfrac{1}{2}$, in combination with a second order quadrature scheme satisfies
\begin{equation*}
\max_{n=1,\ldots,\nbar n}\EX|X(t_n)-\nhat X_n|\, =\, \landau\bigl(h^{1/2}\,N^{(\gamma-\rho)/2}(N^\gamma+N^\eta)+h\,N^{3\gamma}+h^{3/2}N^{(\gamma-\rho)/2}(N^\gamma+N^\eta)^2\bigr)\,.
\end{equation*}
\end{theorem}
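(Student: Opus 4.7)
By Theorem \ref{convergence_theorem}, the global strong error is bounded by $S h^{-1}\max_n\bigl(\EX|L_n|+\EX|K_n|\bigr)$ provided the Lipschitz hypotheses \eqref{lipschitz_cond_1}--\eqref{lipschitz_cond_2} and the step-size restriction \eqref{step_size_restriction} are satisfied. For the $\Theta$-Maruyama method, both reduce to the standing Lipschitz continuity of $f$ and $\lambda_k$ (with Lipschitz constants of respective orders $N^\eta$ and $N^{\gamma+c_k}$ in the scaling of Section \ref{section_scaled_system}) and to $h\theta L_f<1$; Remark \ref{remark_stability_theorem} assures that the stability constant $S$ is $\exp\bigl(\mathcal{O}(N^\gamma+N^\eta)\bigr)$, which is harmless under $\eta,\gamma\leq 0$. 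It therefore suffices to establish sharp asymptotic bounds on the local errors \eqref{local_error_drift}--\eqref{local_error_local_time} in the prescribed scaling and then divide by $h$.

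The central technique is iterated It{\^o}-Taylor expansion through Lemma \ref{lem_ito_formula}. First I rewrite
\begin{equation*}
L_{n+1}=\int_{t_n}^{t_{n+1}}\!\bigl[f(X(s))-f(X(t_n))\bigr]\,\dif s-h\theta\bigl[f(X(t_{n+1}))-f(X(t_n))\bigr]
\end{equation*}
and apply Lemma \ref{lem_ito_formula} componentwise with $F=f$ to both increments of $f(X)$, decomposing each as a Lebesgue integral of $\mathcal{A}f$ plus a martingale increment $M^f$. The deterministic part is bounded by $\mathcal{O}(h^2\|\mathcal{A}f\|_\infty)$; the scaling of $\nabla f[f]$ and of the jump sum in \eqref{generator_pdmp} gives $\|\mathcal{A}f\|_\infty=\mathcal{O}(N^\eta(N^\gamma+N^\eta))$, so after division by $h$ this furnishes the $h(N^\gamma+N^\eta)^2$ contribution. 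For the martingale part, Cauchy-Schwarz combined with the quadratic-variation identity \eqref{def_quad_variation} and the bound $\sum_k\lambda_k|f(\cdot+\nu_k)-f(\cdot)|^2=\mathcal{O}\bigl(N^{2\eta}\cdot N^{\gamma-\rho}\bigr)$ (using $|f(\cdot+\nu_k)-f(\cdot)|\leq\|\nabla f\|\,|\nu_k|$ together with $c_k\leq\rho_k$) yields $\EX|M^f(s)-M^f(t_n)|=\mathcal{O}(\sqrt h\,N^{(\gamma-\rho)/2}(N^\gamma+N^\eta))$, producing after integration the leading $h^{1/2}N^{(\gamma-\rho)/2}(N^\gamma+N^\eta)$ term. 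An entirely analogous analysis of $K_{n+1}$ with $F=\lambda_k$ and the Euler quadrature $\phi_3^{\tn{euler}}$ contributes the same two orders, proving the first assertion.

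For $\theta=\tfrac12$ paired with a second-order quadrature for the local time, the arithmetic-mean symmetry cancels the leading $\mathcal{O}(h^2)$ deterministic term exactly: by Fubini the surviving deterministic piece takes the form $\int_{t_n}^{t_{n+1}}(t_{n+1}-r-h/2)\,\mathcal{A}f(X(r))\,\dif r$, which vanishes upon replacing $\mathcal{A}f(X(r))$ by its value at $r=t_n$ (because the weight function integrates to zero). A second application of Lemma \ref{lem_ito_formula} with $F=\mathcal{A}f$ expresses the surviving deterministic remainder as a double time integral of $\mathcal{A}^2f$ of order $\mathcal{O}(h^3)$ per step; a careful tracking of which pieces of $\mathcal{A}^2f$ dominate after the first-order cancellations shared with the matching symmetric quadrature for $K_{n+1}$ isolates the leading $N^{3\gamma}$ scale, globally producing the $hN^{3\gamma}$ term. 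The residual martingale contribution, arising from $\int_{t_n}^{t_{n+1}}M^f(s)\,\dif s-\tfrac{h}{2}\bigl(M^f(t_{n+1})-M^f(t_n)\bigr)$ together with the once-iterated martingale $M^{\mathcal{A}f}$, is handled by a direct second-moment computation: the centered combination has variance of order $h^5\cdot N^{2\eta}N^{\gamma-\rho}\cdot(N^\gamma+N^\eta)^2$, giving via Cauchy-Schwarz an $\mathcal{O}(h^{5/2}N^{(\gamma-\rho)/2}(N^\gamma+N^\eta)^2)$ contribution per step; division by $h$ yields the third claimed term. The parallel treatment of $K_{n+1}$ under the midpoint \eqref{quad_midpoint} or trapezoidal \eqref{quad_trapez} quadrature follows the identical cancellation pattern and introduces no new leading-order terms.

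The main obstacle is the $\theta=\tfrac12$ analysis: one must simultaneously iterate the It{\^o}-Taylor expansion to the next order for both $f$ and each $\lambda_k$, verify that the second-order quadrature for the local time cancels the first-order drift term in $K_{n+1}$ in lockstep with the trapezoidal cancellation in $L_{n+1}$, identify the dominant surviving contributions in $\mathcal{A}^2f$ and $\mathcal{A}^2\lambda_k$ without overcounting, and compute second moments of the residual martingale pieces sharply, since a naive Cauchy-Schwarz bound on $\int M^f(s)\,\dif s$ alone would lose the half-order gain that the symmetric combination provides. It is precisely this symmetry of the trapezoidal rule around the interval midpoint that produces the improved $h^{5/2}$ scaling of the residual martingale and ultimately reduces the order of the dominant deterministic term from $(N^\gamma+N^\eta)^2$ to $N^{3\gamma}$.
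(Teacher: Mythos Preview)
Your treatment of the Euler-quadrature case is essentially the paper's: It\^o-expand $f(X(s))$ and $\lambda_k(X(s))$, bound the generator and martingale pieces via Lemma~\ref{estimation_lemma}, and invoke Theorem~\ref{convergence_theorem}. That part is fine.

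The $\theta=\tfrac12$ analysis, however, contains two genuine errors. First, the symmetric martingale combination $\int_{t_n}^{t_{n+1}}\bigl(M^f(s)-M^f(t_n)\bigr)\,\dif s-\tfrac{h}{2}\bigl(M^f(t_{n+1})-M^f(t_n)\bigr)$ enjoys \emph{no} half-order gain: integration by parts rewrites it as the stochastic integral $\int_{t_n}^{t_{n+1}}(t_n+h/2-s)\,\dif M^f(s)$, whose second moment is $\EX\int_{t_n}^{t_{n+1}}(t_n+h/2-s)^2\,\dif\langle M^f\rangle_s=\landau(h^3\,N^{\gamma-\rho}N^{2\eta})$, not $\landau(h^5)$. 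This term, together with its analogue $\int M^{\lambda_k}\,\dif s$ in $K_{n+1}$ (for which there is no symmetric partner at all, the quadrature rules \eqref{quad_midpoint}--\eqref{quad_trapez} being explicit in $X(t_n)$), is precisely what produces the leading $h^{1/2}N^{(\gamma-\rho)/2}(N^\gamma+N^\eta)$ contribution in \emph{both} statements of the theorem. The paper simply bounds the two martingale pieces separately by the triangle inequality; the trapezoidal symmetry cancels only the \emph{deterministic} order-$h^2$ term $\int\!\!\int\mathcal{A}f-\tfrac{h}{2}\int\mathcal{A}f$.

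Second, the $hN^{3\gamma}$ term does not come from $\mathcal{A}^2f$: a triple time integral of $\mathcal{A}^2f$ is $\landau(h^3)$ per step and hence contributes at global order $h^2$, not $h$. In the paper this term arises in the $K_{n+1}$ analysis of Section~\ref{section_first_strong_error} as the \emph{spatial} Taylor remainder
\[
\varrho_3=\sum_{j=1}^p\lambda_j(x)\Bigl(\lambda_k(x+\nu_j)-\lambda_k(x)-\nabla\lambda_k(x)[\nu_j]\Bigr)=\landau\bigl(N^{3\gamma+c_k}\bigr)\,.
\]
The midpoint and trapezoidal quadrature rules match only the linearisation $\nabla\lambda_k\bigl[\sum_j\lambda_j\nu_j\bigr]$ of the jump part of $\mathcal{A}\lambda_k$, not the nonlocal difference $\sum_j\lambda_j\bigl(\lambda_k(\cdot+\nu_j)-\lambda_k\bigr)$ itself; this mismatch, multiplied by $h^2/2$ and by $|\nu_k|$, yields the $h^2N^{3\gamma}$ local term and hence $hN^{3\gamma}$ globally. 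That is exactly the term the improved quadratures of Section~\ref{section_improved_method} are designed to remove, which your account via $\mathcal{A}^2f$ would not explain.
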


\begin{proof} The statement in the theorem follows from Theorem \ref{convergence_theorem} and the strong local error estimates \eqref{strong_error_est_I} and \eqref{strong_error_est_II} derived in the subsequent section.
\end{proof}

\begin{remark} The improvement in the error due to a combination of two second order methods is that the term $hN^\eta$ is eliminated. Therefore, it is expected that the method performs better in the step-size range where the second order term is predominant and $N^\gamma$ is small compared to $N^\eta$. 
\end{remark}


\subsubsection{Improved trapezoidal and midpoint rule}\label{section_improved_method}

The intended gain in the use of deterministically higher order methods is that the first order term in the expansion of the strong error vanishes. This is only partly achieved using the `classical' midpoint and trapezoidal quadrature rules \eqref{quad_midpoint} and \eqref{quad_trapez} as a term $hN^{3\gamma}$ remains in the strong error expansion, see Theorem \ref{theorem_theta_method_strong}. However by a simple change in the definition of these methods, which reflects the discontinuous nature of the process, we obtain that also this term vanishes. To this end we define the \emph{improved midpoint quadrature rule}
\begin{eqnarray}
\phi_3^{\tn{imid}}(k,x,h) &=& \lambda_k\Bigl(x+\frac{h}{2}f(x)\Bigl)+\frac{h}{2}\sum_{j=1}^p \lambda_j(x)\bigl(\lambda_k(x+\nu_j)-\lambda_k(x)\bigr)\label{imp_quad_midpoint}
\end{eqnarray}
and the \emph{improved trapezoidal quadrature rule}
\begin{eqnarray}
\phi_3^{\tn{itrap}}(k,x,h) &=& \frac{1}{2}\,\lambda_k(x)+\frac{1}{2}\lambda_k\Bigl(x+h f(x)\Bigl)+\frac{h}{2}\sum_{j=1}^p \lambda_j(x)\bigl(\lambda_k(x+\nu_j)-\lambda_k(x)\bigr)\,.\phantom{xxxx}\label{imp_quad_trapez}
\end{eqnarray}

We state the strong error estimate for the improved methods in the subsequent theorem and comparing to Theorem \ref{theorem_theta_method_strong} we see that the first order term in the error expansion vanishes.

\begin{theorem}\label{theorem_imp_methods} The strong global error for the trapezoidal rule, i.e., $\theta=\frac{1}{2}$, in combination with the improved midpoint \eqref{imp_quad_midpoint} or improved trapezoidal quadrature rule \eqref{imp_quad_trapez} satisfies
\begin{equation*}
\max_{n=1,\ldots,\nbar n}\EX|X(t_n)-\nhat X_n|\, =\, \landau\bigl(h^{1/2}\,N^{(\gamma-\rho)/2}(N^\gamma+N^\eta)+h^{3/2}\,N^{(\gamma-\rho)/2}(N^\gamma+N^\eta)^2\bigr).
\end{equation*}
\end{theorem}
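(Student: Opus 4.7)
The plan is to invoke Theorem \ref{convergence_theorem}: once one establishes local error estimates $\EX|L_{n+1}| + \EX|K_{n+1}| = \landau\bigl(h^{3/2} N^{(\gamma-\rho)/2}(N^\gamma+N^\eta) + h^{5/2} N^{(\gamma-\rho)/2}(N^\gamma+N^\eta)^2\bigr)$, dividing by $h$ in the stability estimate \eqref{strong_stability_estimate} produces exactly the bound claimed in Theorem \ref{theorem_imp_methods}. Since the drift-step $\phi_1$ is unchanged from the trapezoidal case ($\theta=\tfrac{1}{2}$) already handled in the proof of Theorem \ref{theorem_theta_method_strong}, the bound on $L_{n+1}$ can be taken directly from there and only contributes the $h^{3/2} N^{(\gamma-\rho)/2}(N^\gamma+N^\eta)^2$ global term. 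All the work therefore concentrates on showing that $\EX|K_{n+1}|$ with the improved quadrature rules \eqref{imp_quad_midpoint}, \eqref{imp_quad_trapez} is of order $h^{3/2}$, i.e.~that the deterministic $\landau(h^2)$-term which produced the $hN^{3\gamma}$ contribution in the classical case disappears.

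The central computation is an It{\^o}-Taylor expansion of $s\mapsto\lambda_k(X(s))$. Applying Lemma \ref{lem_ito_formula} to $\lambda_k$ and integrating in time gives
\begin{equation*}
\int_{t_n}^{t_{n+1}}\lambda_k(X(s))\,\dif s \;=\; h\lambda_k(X(t_n)) + \frac{h^2}{2}\mathcal{A}\lambda_k(X(t_n)) + R^{\tn{det}}_{n,k} + R^{\tn{mart}}_{n,k},
\end{equation*}
where $\mathcal{A}\lambda_k(x)=\nabla\lambda_k[f](x)+\sum_j \lambda_j(x)(\lambda_k(x+\nu_j)-\lambda_k(x))$, the deterministic remainder $R^{\tn{det}}_{n,k}$ is of order $h^3$ (with the scaling factors inherited from the coefficients), and the martingale remainder $R^{\tn{mart}}_{n,k}$ has $L^1$-norm controlled via \eqref{def_quad_variation} by an expression of order $h^{3/2}N^{(\gamma-\rho)/2}(N^\gamma+N^\eta)$, exactly producing the $h^{1/2}$ leading term of the theorem. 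On the other hand, Taylor expanding the improved midpoint rule \eqref{imp_quad_midpoint} in the single smooth argument $x+\tfrac{h}{2}f(x)$ gives
\begin{equation*}
h\phi_3^{\tn{imid}}(k,X(t_n),h) = h\lambda_k(X(t_n)) + \frac{h^2}{2}\nabla\lambda_k[f](X(t_n)) + \frac{h^2}{2}\sum_j \lambda_j(X(t_n))\bigl(\lambda_k(X(t_n)+\nu_j)-\lambda_k(X(t_n))\bigr) + \landau(h^3),
\end{equation*}
which, after regrouping, coincides with $h\lambda_k(X(t_n)) + \tfrac{h^2}{2}\mathcal{A}\lambda_k(X(t_n)) + \landau(h^3)$. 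Subtracting the two expressions eliminates the $h$- and $h^2$-terms exactly, leaving only $R^{\tn{det}}_{n,k}$, the martingale remainder, and analogous $h^3$ residuals from the quadrature Taylor expansion. The improved trapezoidal rule \eqref{imp_quad_trapez} is handled by the same computation applied at both endpoints of the interval.

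The key observation responsible for the cancellation, and simultaneously the main obstacle, is the following: in the classical midpoint rule \eqref{quad_midpoint} one Taylor-expands $\lambda_k\bigl(x+\tfrac{h}{2}\sum_j \lambda_j(x)\nu_j\bigr)$ and produces the linearisation $\sum_j \lambda_j(x)\nabla\lambda_k(x)[\nu_j]$, which matches the jump term of $\mathcal{A}\lambda_k(x)$ only up to a discrepancy of order $|\nu_j|^2$. Multiplied by $\lambda_j(x)$ and summed, this discrepancy scales as $N^{\gamma}\cdot N^{\gamma+c_j}\cdot N^{-2\rho_j}$, which is precisely the source of the $hN^{3\gamma}$ term in Theorem \ref{theorem_theta_method_strong}. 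The improved rules replace this linearisation by the true jump increment $\lambda_k(x+\nu_j)-\lambda_k(x)$, so at the level of the second-order term of the local expansion the match with $\mathcal{A}\lambda_k$ is exact rather than Taylor-linear. The main technical labour that I expect to dominate the proof is the careful bookkeeping of the scaling factors $N^\eta, N^\gamma, N^{c_k}, N^{-\rho_k}$ through the third-order It{\^o}-Taylor remainder and through the second derivatives appearing in the residual Taylor expansion of $\phi_3^{\tn{imid}}, \phi_3^{\tn{itrap}}$, in order to verify that every surviving term is subsumed by the two terms displayed in the theorem. Once this bookkeeping is complete, plugging into \eqref{strong_stability_estimate} finishes the proof.
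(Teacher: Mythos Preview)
Your proposal is correct and follows essentially the same route as the paper: invoke Theorem \ref{convergence_theorem}, reuse the $L_{n+1}$ estimate \eqref{strong_error_est_II} for $\theta=\tfrac12$, and redo the $K_{n+1}$ analysis of Section \ref{section_first_strong_error} with the improved rules, the key point being that the explicit jump increment $\lambda_k(x+\nu_j)-\lambda_k(x)$ in \eqref{imp_quad_midpoint}--\eqref{imp_quad_trapez} matches the jump part of $\mathcal{A}\lambda_k$ exactly, so the discrepancy $\varrho_3$ responsible for the $hN^{3\gamma}$ term vanishes (this is precisely Remark \ref{reamrk_improved_method}). One small correction: the drift local error $L_{n+1}$ for $\theta=\tfrac12$ does not contribute only to the $h^{3/2}$ global term --- its leading piece $h^{3/2}N^{(\gamma-\rho)/2}N^\eta$ in \eqref{strong_error_est_II} supplies the $N^\eta$ portion of the \emph{first} term $h^{1/2}N^{(\gamma-\rho)/2}(N^\gamma+N^\eta)$ in the theorem, while the $N^\gamma$ portion of that term comes from the martingale remainder in $K_{n+1}$; both local errors feed both global terms.
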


\begin{proof} The statement in the theorem follows from Theorem \ref{convergence_theorem} and the strong local error estimates \eqref{strong_error_est_III}, see Remark \ref{reamrk_improved_method}, and \eqref{strong_error_est_II} derived in the subsequent section.
\end{proof}

\section{Local error estimates for Theorems \ref{theorem_theta_method_strong} and  \ref{theorem_imp_methods}}\label{section_detailed_error_estimates}

In order to prove Theorems \ref{theorem_theta_method_strong} and \ref{theorem_imp_methods} we need to validate the conditions in Theorem \ref{convergence_theorem}, i.e., we have to establish the Lipschitz conditions \eqref{lipschitz_cond_1} and \eqref{lipschitz_cond_2} on the increment functions and perform a strong local error analysis. The first task is straightforward, as the Lipschitz conditions correspond to the stability conditions of well-analysed deterministic methods and follow immediately from Lipschitz conditions of the coefficients functions. Moreover, without loss of generality we might assume that the Lipschitz constants $L_1$ and $L_2$ are proportional to the Lipschitz constant on $f$ and $L_{3,k}$ is proportional to the Lipschitz constant on $\lambda_k$. It remains to perform the local error analysis, which is the content of the next sections.

\subsection{A first auxiliary analytical tool}

The following proposition collects asymptotic properties of the terms arising in the It{\^o}-formula \eqref{ito_formula}. In view of future estimates we now consider the scaled process $X^N$ introduced in Section \ref{section_scaled_system}. In order to obtain the corresponding unscaled results simply set $N=1$. 

\begin{lemma}\label{estimation_lemma} The generator \eqref{generator_pdmp} applied to a function $F\in C^1_b(\rr^d)$ satisfies
\begin{equation*}
\,\mathcal{A}F\,=\, \landau\bigl(L_F\,(N^\eta+N^\gamma)\bigr)
\end{equation*}
and the martingale $(M^F(t))_{t\geq 0}$ arising in the It\^o-formula \eqref{ito_formula} applied to $F$ satisfies for all $t,h\geq 0$ that
\begin{equation*}
\EX|M^F(t+h)-M^F(t)|\,=\,\landau\Bigl(h^{1/2}\,L_F\,N^{\gamma/2}N^{-\rho/2}\Bigr)\,,
\end{equation*}
where $\rho=\min_{k=1,\ldots,p}\,\rho_k$.
\end{lemma}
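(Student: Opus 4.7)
The plan is to read off both bounds directly from the explicit formulas established in Section~\ref{section_set_up}: identity \eqref{generator_pdmp} for $\mathcal{A}F$ and the second-moment identity \eqref{def_quad_variation} for the increments of $M^F$. After substituting the scaling conventions of \eqref{scaled_random_time_change_eq}, the drift carries a factor $N^\eta$, the $k$-th intensity a factor $N^{\gamma+c_k}$, the coefficients $f^N,\lambda^N_k$ are of order one, and the jump heights satisfy $|\nu^N_k|=\landau(N^{-\rho_k})$ with $c_k\leq\rho_k$. Throughout I will write $L_F$ for a common bound on $F$ and $\nabla F$; the $C^1_b$ hypothesis makes it finite and, by the mean value theorem, it simultaneously serves as a Lipschitz constant of $F$.

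For the generator estimate I will split $\mathcal{A}F$ into its drift part $\nabla F[N^\eta f^N]$, directly bounded by $L_F N^\eta \|f^N\|_\infty$, and its jump part. Each jump contribution I will estimate by the mean value theorem, $|F(x+\nu^N_k)-F(x)|\leq L_F|\nu^N_k|=\landau(L_F N^{-\rho_k})$; multiplying by $N^{\gamma+c_k}\lambda^N_k(x)$ and exploiting $c_k\leq\rho_k$ collapses the exponent to $\gamma$, yielding $\landau(L_F N^\gamma)$ per reaction and hence in total, since $p$ is fixed.

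For the martingale estimate I plan to bound the second moment first and then pass to the first moment. Substituting the scaled rates and jump heights into \eqref{def_quad_variation} and using $|F(x+\nu^N_k)-F(x)|^2\leq L_F^2|\nu^N_k|^2=\landau(L_F^2 N^{-2\rho_k})$, the $k$-th integrand summand becomes of order $L_F^2 N^{\gamma+c_k-2\rho_k}$. I will first invoke $c_k\leq\rho_k$ to reduce this to $L_F^2 N^{\gamma-\rho_k}$ and then $\rho_k\geq\rho=\min_j\rho_j$ to reduce it further to $L_F^2 N^{\gamma-\rho}$; integrating over $[t,t+h]$ gives $\EX|M^F(t+h)-M^F(t)|^2=\landau(h L_F^2 N^{\gamma-\rho})$, and Jensen's inequality $\EX|\cdot|\leq(\EX|\cdot|^2)^{1/2}$ yields the claimed first-moment bound.

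No genuine technical obstacle arises in this proof; the only subtlety is the bookkeeping of scaling exponents, where the two constraints $c_k\leq\rho_k$ and $\rho_k\geq\rho$ must be applied at precisely the right points so that all per-reaction exponents collapse into the uniform expressions $N^\gamma$ and $N^{(\gamma-\rho)/2}$ appearing in the lemma statement.
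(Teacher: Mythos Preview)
Your proposal is correct and follows essentially the same route as the paper: bound $\mathcal{A}F$ via the explicit formula \eqref{generator_pdmp} and the mean value theorem on the jump differences, then substitute the scaling; bound the martingale increment by Jensen followed by the second-moment identity \eqref{def_quad_variation} with the same Lipschitz estimate on $|F(x+\nu_k)-F(x)|^2$. Your bookkeeping of the exponents (first $c_k\leq\rho_k$, then $\rho_k\geq\rho$) is exactly what the paper does, only spelled out a bit more explicitly.
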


\begin{proof} The generator \eqref{generator_pdmp} applied to a function $F$ gives
\begin{eqnarray*}
\big|\mathcal{A}F(x)\big|
&=&\Big|\nabla F[f](x) +\sum_{k=1}^p\lambda_k(x)\,\bigl(F(x+\nu_k)-F(x)\bigr)\Big|\\[2ex]
&\leq&\|\nabla F\|_0\Bigl(\|f\|_0 +\sum_{k=1}^p\|\lambda_k\|_0\,|\nu_k|\Bigr).
\end{eqnarray*}
The first result in the lemma follows considering the scaled process, i.e., $f$ is substituted by $N^\eta f^N$, $\lambda_k$ is substituted by $N^{\gamma+c_k}\lambda_k^N$ and $\nu_k$ is substituted by $\nu_k^N$ which satisfies $|\nu_k^N|=\landau(N^{-\rho_k})$, where $c_k\leq\rho_k$. Thus it remains to consider the asymptotic behaviour of the martingale $M^F$. Jensen's inequality implies $\EX|M^F(t+h)-M^F(t)|\leq\bigl(\EX|M^F(t+h)-M^F(t)|^2\bigr)^{1/2}$ and hence estimating the integrand in the right hand side of the quadratic variation \eqref{def_quad_variation} yields almost surely
\begin{equation*}
\sum_{k=1}^p\lambda_k(X(s))\,\big|F(X(s)+\nu_k)-F(X(s))\big|^2\leq \|\nabla F\|_0^2\,\sum_{k=1}^p\|\lambda_k\|_0\,|\nu_k|^2 = \landau\bigl(L_F^2\,N^{\gamma-\rho}\bigl),
\end{equation*}
and the second result in the lemma follows.
\end{proof}

\medskip

We now present the local error analysis of the $\Theta$-Maruyama-methods. The strong local error estimates \eqref{strong_error_est_I}, \eqref{strong_error_est_III} and \eqref{strong_error_est_II} imply in combination with the convergence theorem the results in Theorem \ref{theorem_theta_method_strong} and Theorem \ref{theorem_imp_methods}.

\subsection{The strong local error -- the terms $K_{n+1}$}\label{section_first_strong_error}

We need to consider the modulus of the one-step approximations 
\begin{equation*}
\EX|K_{n+1}^\tn{method}|\,=\, \EX\Big|\sum_{k=1}^p\Bigl(\int_{t_n}^{t_n+h}\lambda_k(X(s))\,\dif s-h\phi_3^{\tn{method}}(k,X(t_n),h)\Bigr)\,\nu_k\Big|
\end{equation*}
and we prove in this section that
\begin{equation}\label{strong_error_est_I}
\left.\begin{array}{rcl}
     \EX |K_{n+1}^\tn{euler}|&=&\landau\bigl(h^{3/2}\,N^{3\gamma/2-\rho/2}+h^2\,N^{\gamma}(N^\gamma+N^\eta)\bigr),\\[2.5ex]  
     \EX |K_{n+1}^\tn{mid/trap}|&=& \landau\bigl(h^{3/2}N^{3\gamma/2-\rho/2}+h^2N^{3\gamma}\bigl)+ \landau\bigl( h^{5/2}N^{3\gamma/2-\rho/2}(N^\gamma+N^\eta)\bigl)\\[2.5ex]
&&\mbox{} +\landau\bigl(h^3N^{\gamma}(N^{\gamma}+N^{\eta})^2+h^3N^{2\gamma}(N^{\gamma}+N^{\eta})^2\bigr).
\end{array}\right.
\end{equation}
In order to obtain the error estimates \eqref{strong_error_est_I} we analyse the error $\EX|K_{n+1}^\tn{method}|$ using (It{\^o}-)Taylor expansions of the involved functions and for simplicity of notation we omit the index $n$. First of all, an It{\^o}-Taylor-expansion of the integrand in the term correspoding to the exact solution yields
\begin{equation}\label{lambda_ito_1}
\int_t^{t+h}\lambda_k(X(s))\,\dif s= h\,\lambda_k(X(t))+\int_t^{t+h}\int_t^{s}\mathcal{A}\lambda_k(X(r))\,\dif r\,\dif s + \int_t^{t+h} \Bigl(M^{\lambda_k}(s)-M^{\lambda_k}(t)\Bigr)\,\dif s\,,
\end{equation}
where the martingal term satisfies due to Lemma \ref{estimation_lemma} and as $\|\nabla\lambda_k^N\|=\landau(N^{\gamma+c_k})$ that
\begin{equation*}
\EX\big|M^{\lambda_k}(s)-M^{\lambda_k}(t)\big|\ =\ \landau\bigl((s-t)^{1/2}N^{3\gamma/2+c_k-\rho/2}\bigr)\,.
\end{equation*}
We are now able to estimate the error in the case of the Euler quadrature rule. Subtracting the increment function $h\phi_3^{\tn{euler}}(k,X(t),h)\,=\,h\lambda_k(X(t))$ from the expansion \eqref{lambda_ito_1} we obtain
\begin{equation*}
\EX\Big|\int_{t}^{t+h}\lambda_k(X(s))\,\dif s-h\phi_3^{\tn{euler}}(k,X(t),h)\Big|\ = \ \landau\bigl(h^{3/2}\,N^{3\gamma/2+c_k-\rho/2}+h^2\,N^{\gamma+c_k}(N^\gamma+N^\eta)\bigr)\,.
\end{equation*}
The triangle inequality and $|\nu_k|=\landau(N^{-\rho_k})$ with $c_k\leq \rho_k$ implies the estimate for $\EX |K_{n+1}^\tn{euler}|$ in \eqref{strong_error_est_I}.

\medskip
Some more work is needed to estimate the local error for the midpoint or trapezoidal quadrature rule. A Taylor expansion of the respective increment functions yields in the case of the midpoint rule
\begin{eqnarray*}
h\phi_3^{\tn{mid}}(k,X(t),h) &=& h\,\lambda_k\Bigl(X(t)+\frac{h}{2}f(X(t))+\frac{h}{2}\sum_{j=1}^p \lambda_j(X(t))\nu_j\Bigl)\\
&=& h\,\lambda_k(X(t))+\frac{h^2}{2}\nabla\lambda_k[f](X(t))+\frac{h^2}{2} \nabla\lambda_k\Bigl[\sum_{j=1}^p \lambda_j(\cdot)\nu_j\Bigr](X(t)) + \varrho_1,
\end{eqnarray*}
where the remainder term satisfies $\varrho_1=\landau\bigl(h^3N^{\gamma+c_k}(N^{\gamma}+N^{\eta})^2\bigr)$, and for the trapezoidal rule
\begin{eqnarray*}
h\phi_3^{\tn{trap}}(k,X(t),h) &=& \frac{h}{2}\,\lambda_k(X(t))+\frac{h}{2}\lambda_k\Bigl(X(t)+h f(X(t))+h\sum_{j=1}^p \lambda_j(X(t))\nu_j\Bigl)\\
&=& h\,\lambda_k(X(t))+\frac{h^2}{2}\nabla\lambda_k[f](X(t))+\frac{h^2}{2}\nabla\lambda_k\Bigl[\sum_{j=1}^p \lambda_j(\cdot)\nu_j\Bigr](X(t)) + \varrho_2,
\end{eqnarray*}
where the remainder term $\varrho_2$ possesses the same asymptotics as $\varrho_1$. 
\medskip

In order to compare this expansion to $\lambda_k(X(s))$ we further expand in the right hand side of \eqref{lambda_ito_1} the two summands in the generator $\mathcal{A}\lambda_k$ using the It\^o-formula \eqref{ito_formula}. First, for the continuous part this yields
\begin{equation*}
\nabla\lambda_k[f](X(s))=\nabla\lambda_k[f](X(t))+\int_t^{s}\mathcal{A}\bigr(\nabla\lambda_k[f]\bigl)(X(r))\,\dif r+\Bigl(M^{\nabla\lambda_k[f]}(s)-M^{\nabla\lambda_k[f]}(t)\Bigr)
\end{equation*}
and for the jump part
\begin{eqnarray*}
\sum_{j=1}^p\lambda_j(X(s))\,\Bigl(\lambda_k(X(s)+\nu_j)-\lambda_k(X(s))\Bigr)
&=&\sum_{j=1}^p\lambda_j(X(t))\,\Bigl(\lambda_k(X(t)+\nu_j)-\lambda_k(X(t))\Bigr)\\[2ex]
&&\mbox{} + \int_t^{s}\mathcal{A}\Bigl(\sum_{j=1}^p\lambda_j(\cdot)\,\Bigl(\lambda_k(\cdot+\nu_j)-\lambda_k(\cdot)\Bigr)(X(r))\,\dif r\\[2ex]
&&\mbox{} + M^{\nabla\lambda_k[\lambda_j\nu_j]}(s)-M^{\nabla\lambda_k[\lambda_j\nu_j]}(t)\,.
\end{eqnarray*}
Finally, we further expand the first term on the right hand side using Taylor's Theorem and obtain due to linearity of the gradient vector
\begin{eqnarray*}
\sum_{j=1}^p\lambda_j(X(t))\,\Bigl(\lambda_k(X(t)+\nu_j)-\lambda_k(X(t))\Bigr)
&=&\nabla\lambda_k\Bigl[\sum_{j=1}^p\lambda_j(\cdot)\nu_j\Bigr](X(t)) +\, \varrho_3\, 
\end{eqnarray*}
where the remainder term satisfies $\varrho_3=\landau(N^{3\gamma+c_k})$. 
Thus, overall we obtain the expansion
\begin{eqnarray*}
\lefteqn{\int_t^{t+h}\lambda_k(X(s))\,\dif s\ =}\\
&=& h\,\lambda_k(X(t))
+ \frac{h^2}{2}\,\nabla\lambda_k[f](X(t)) 
+ \frac{h^2}{2}\,\nabla\lambda_k\Bigl[\sum_{j=1}^m\lambda_j(\cdot)\nu_j\Bigr](X(t))+ \frac{h^2}{2}\,\varrho_3 \\[2ex]
&&\hspace{-20pt}\mbox{} +\int_t^{t+h}\int_t^{s}\int_t^{r}\mathcal{A}\Bigl(\sum_{j=1}^p\lambda_j(\cdot)\,\Bigl(\lambda_k(\cdot+\nu_j)-\lambda_k(\cdot)\Bigr)(X(v))\,\dif v\,\dif r\,\dif s + \int_t^{t+h} \Bigl(M^{\lambda_k}(s)-M^{\lambda_k}(t)\Bigr)\,\dif s\\[2ex]
&&\hspace{-20pt}\mbox{} + \int_t^{t+h}\int_t^{s} \Bigl(M^{\nabla\lambda_k[f]}(r)-M^{\nabla\lambda_k[f]}(t)\Bigr)+\Bigl(M^{\nabla\lambda_k[\lambda_j\nu_j]}(r)-M^{\nabla\lambda_k[\lambda_j\nu_j]}(t)\Bigr)\,\dif r\,\dif s\,.
\end{eqnarray*}
Here due to Lemma \ref{estimation_lemma} the two last martingale terms in the right hand side possess the following asymptotics
\begin{eqnarray*}
\EX\big|M^{\nabla\lambda_k[f]}(s)-M^{\nabla\lambda_k[f]}(t)\big|&=&\landau\bigl((s-t)^{1/2}N^{3\gamma/2+c_k-\rho/2}\,N^\eta\bigr)\,,\\[2ex]
\EX\big|M^{\nabla\lambda_k[\lambda_j\nu_j]}(s)-M^{\nabla\lambda_k[\lambda_j\nu_j]}(t)\big|&=&\landau\bigl((s-t)^{1/2}N^{5\gamma/2+c_k-\rho/2}\bigr)\,.
\end{eqnarray*}
Therefore, subtracting the two expansions we obtain the error estimate
\begin{eqnarray*}\label{local_poisson_strong_mid_error}
\EX\Big|\int_{t}^{t+h}\lambda_k(X(s))\,\dif s-h\phi_3^{\tn{mid/trap}}(k,X(t),h)\Big|& =& \landau\bigl(h^{3/2}N^{3\gamma/2+c_k-\rho/2}+h^2N^{3\gamma+c_k}\bigl)\nonumber\\[2ex]
&&\hspace{-1cm}\mbox{} + \landau\bigl( h^{5/2}N^{3\gamma/2+c_k-\rho/2}(N^\gamma+N^\eta)\bigl)\\[2.5ex]
&&\hspace{-1cm}\mbox{} +\landau\bigl(h^3N^{\gamma+c_k}(N^{\gamma}+N^{\eta})^2+h^3N^{2\gamma+c_k}(N^{\gamma}+N^{\eta})^2\bigr)\nonumber
\end{eqnarray*}
and the estimate for $\EX |K_{n+1}^\tn{mid/trap}|$ in \eqref{strong_error_est_I} follows.

\begin{remark}\label{reamrk_improved_method} The proof is completely analogous for the improved methods in Section \ref{section_improved_method}, with the following two exceptions. First, the Taylor expansion of the increment function yields
\begin{equation*}
h\phi_3^\tn{imid/itrap}(k,X(t),h)= h\,\lambda_k(X(t))+\frac{h^2}{2}\nabla\lambda_k[f](X(t))+\sum_{j=1}^p\lambda_j(X(t))\,\Bigl(\lambda_k(X(t)+\nu_j)-\lambda_k(X(t))\Bigr)+\varrho_4,
\end{equation*}
where the remainder term satisfies $\varrho_4=\landau(N^{\gamma+c_k}N^{2\eta})$. Secondly, this expansion yields that the remainder term $\varrho_3$ vanishes. Hence, we obtain the improved estimate
\begin{equation}\label{strong_error_est_III}
\EX|K_{n+1}^{\tn{imid/itrap}}|\,=\,\landau\bigl(h^{3/2}N^{3\gamma/2-\rho/2}+h^{5/2}N^{3\gamma/2-\rho/2}(N^\gamma+N^\eta)\bigr)
\end{equation}
which does not contain a second order term.
\end{remark}

\subsection{The strong local error -- the terms $L_{n+1}$}

In this section we consider the local error of the drift increment function, i.e.,
\begin{equation*}
\EX|L_{n+1}^\tn{theta}| \ =\  \EX\Big|\int_{t_n}^{t_{n}+h} f(X(s))\,\dif s - h(1-\theta)f(X(t_n))-h\theta f(X(t_n+h))\Big|
\end{equation*}
and prove the local error estimates
\begin{equation}\label{strong_error_est_II}
\EX |L_{n+1}^\tn{theta}|\ =\
\left\{\begin{array}{cl}
\landau\bigl(h^{3/2}\,N^{\gamma/2-\rho/2}N^\eta + h^2\,N^\eta(N^\gamma+N^\eta)\bigr) & \tn{if } \theta\neq\tfrac{1}{2}\,,\\[2ex]
\landau\bigl(h^{3/2}\,N^{\gamma/2-\rho/2}N^\eta+h^{5/2}\,N^{\gamma/2-\rho/2}N^\eta(N^\gamma+N^\eta)\bigr) & \tn{if } \theta=\tfrac{1}{2}\,.
\end{array} \right.
\end{equation}

Analogously to the preceding section we use (It\^o-)Taylor expansions of the summands in the definition of the local error in order to obtain the estimates \eqref{strong_error_est_II}. Again we omit the index $n$. First, expanding the term $f(X(s))$ using the It{\^o} formula \eqref{ito_formula} yields
\begin{eqnarray*}
\int_t^{t+h}f(X(s))\,\dif s&=& h\,f(X(t))+\int_t^{t+h}\int_t^{s} \mathcal{A}f(X(r))\,\dif r\,\dif s + \int_t^{t+h}\Bigl(M^f(s)-M^f(t)\Bigr)\,\dif s
\end{eqnarray*}
and
\begin{eqnarray*}
f(X(t+h))&=& f(X(t))+\int_t^{t+h} \mathcal{A}f(X(s))\,\dif s + \Bigl(M^f(t+h)-M^f(t)\Bigr)\,.
\end{eqnarray*}
Therefore, the local error of the $\Theta$-Maruyama method becomes
\begin{eqnarray*}
\lefteqn{\EX\Big|\int_{t}^{t+h} f(X(s))\,\dif s - h(1-\theta) f(X(t))-h\theta f(X(t+h))\Big|}\\[2ex]
&&\phantom{xxxxxxxxxxxxxxxxxxxxx}\leq\ \EX\Big|\int_t^{t+h} \Bigl(M^f(s)-M^f(t)\Bigr)\,\dif s\Big|+h\theta\,\EX\Big|M^f(t+h)-M^f(t)\Big|
 \\&&\phantom{xxxxxxxxxxxxxxxxxxxxx\leq}\mbox{}+\EX\Big|\int_t^{t+h}\int_t^{s}\mathcal{A}f(X(r))\,\dif r\,\dif s- h\theta\int_t^{t+h} \mathcal{A}f(X(s))\,\dif s\Big|\,.
 \end{eqnarray*}
The individual terms in the right hand side are now estimated using Lemma \ref{estimation_lemma} and for the last term we note that it satisfies
\begin{equation*}
\EX\Big|\int_t^{t+h}\int_t^{t'}\mathcal{A}f(X(s))\,\dif s\,\dif t'- h\theta\int_t^{t+h} \mathcal{A}f(X(s))\,\dif s\Big|\ =\ \landau(h^2N^\eta(N^\gamma+N^\eta)).
\end{equation*}
If $\theta=1/2$ a further application of the It\^o-formula shows that the order two terms cancel and we obtain
\begin{equation*}
\EX\Big|\int_t^{t+h}\int_t^{s}\mathcal{A}f(X(r))\,\dif r\,\dif s- \frac{h}{2}\int_t^{t+h} \mathcal{A}f(X(s))\,\dif s\Big|\ =\ \landau(h^{5/2}N^{\gamma/2+\rho/2}N^\eta(N^\gamma+N^\eta))\,.
\end{equation*}
Overall, this establishes the estimates \eqref{strong_error_est_II}.

\section{Numerical examples}\label{section_numerical_examples}

In this section we provide two numerical examples to illustrate the analytical results for the $\Theta$-Maruyama methods. The first example is a simple linear, scalar model for which an exact solution can be calculated and the second is a phenomenological model of bacteriophage growth. 

For the convergence experiments we compared approximations obtained by $\Theta$-methods calculated from the same Poisson paths and the endpoint and estimated the strong error by
\begin{equation*}
\EX|X(T)-\nhat X_{Nh}|\approx\frac{1}{M}\sum_{j=1}^M |X(T,\omega_j)-\nhat X_{Nh}(\omega_j)|
\end{equation*}
for $M=200$ trajectories. 

\subsection{Example 1 - linear, scalar RTE}

As a first example we consider the linear, scalar RTE
\begin{equation}\label{numerical_example_1}
X(t) = X(0) -\alpha \int_0^t X(s)ds + \epsilon Y\biggl(\lambda\int_0^t X(s) \dif s \biggr)
\end{equation}
with positive coefficients $\alpha,\lambda,\eps>0$ on the time interval $[0,T]=[0,5]$ started at $X(0)=10$. The linearity of the coefficient functions allows to analytically solve the inter-jump ODE and the integral along the inter-jump motion. Therefore an exact solution to \eqref{numerical_example_1} can be constructed for a given path of the Poisson process $Y$. We have carried out convergence experiments for two sets of coefficients. In the first set the noise strength $\eps\lambda$ is of approximately the same order as the drift whereas in the second set the noise strength is reduces and thus corresponds to small noise. We implemented the explicit and implicit Euler method combined with an Euler quadrature rule and the trapezoidal rule combined with a second order quadrature rule. Note that for this example the trapezoidal, the midpoint and the improved methods all coincide. The plots in Figure \ref{example_1_strong} report the results for the strong error of these methods. For the first set of parameters one can observe in the left plot the predicted convergence behaviour of order $1/2$. However, note that in comparison to the Euler method the trapezoidal rule shows a better convergence behaviour. In comparison in the plot on the right an improved convergence rate of $1$ can be seen for the small noise case.

\begin{figure}[htp]
\begin{center}
\includegraphics[width=0.49\textwidth, clip=true, trim= 0 0 0 0]{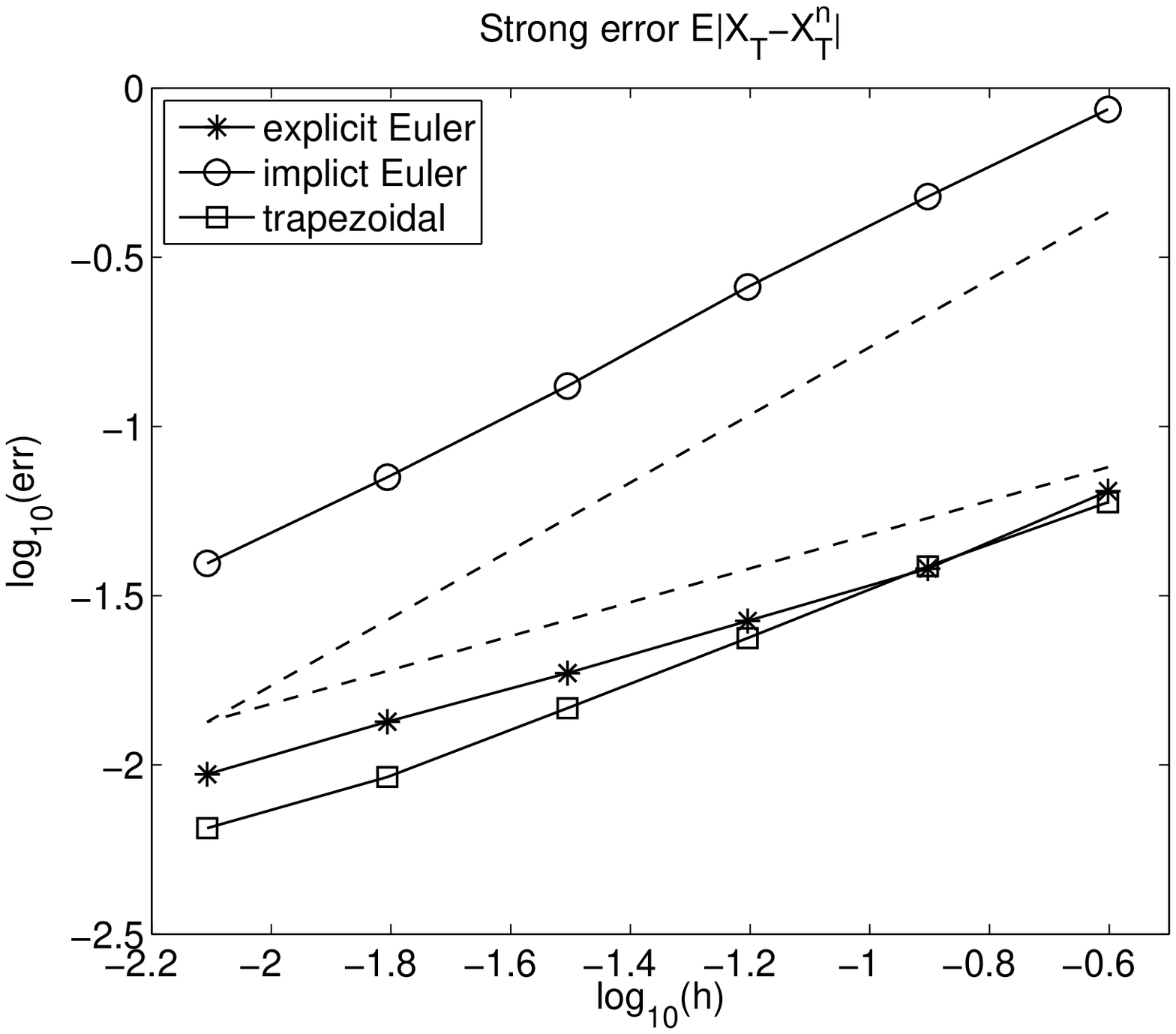}
\includegraphics[width=0.49\textwidth, clip=true, trim= 0 0 0 0]{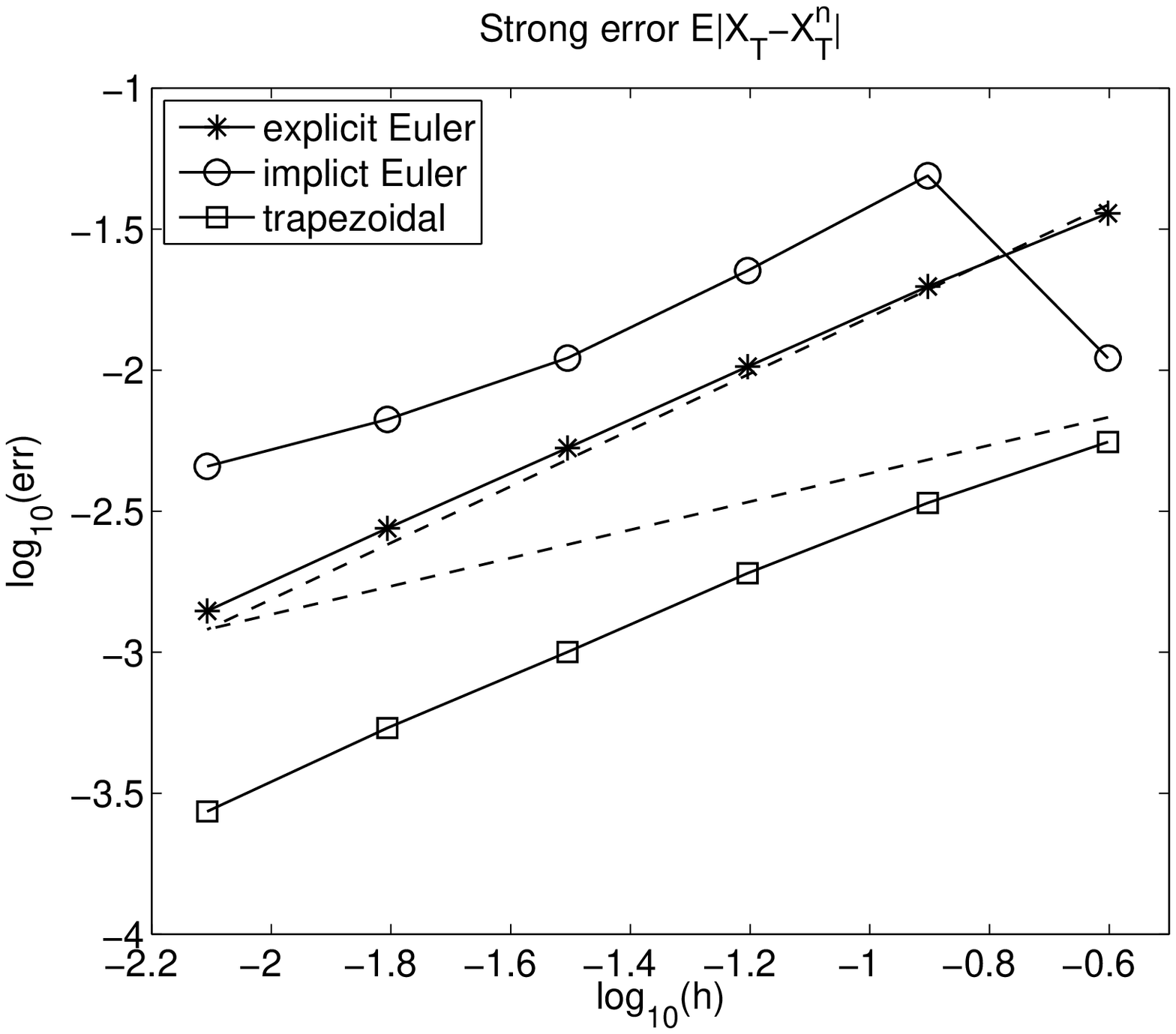}
\end{center}\caption{Convergence plots for the strong error for Example 1 with the parameter $\alpha=1.5$, $\lambda=200$, $\eps=0.007$ (left) and $\alpha=1.5$, $\lambda=500$, $\eps=0.001$ (right). Dashed lines are order lines with slopes $0.5$ and $1$ for comparison.}\label{example_1_strong}
\end{figure}

\subsection{Example 2 - bacteriophage model} 

As a second example we consider a chemical reaction system which models the growth of a bacteriophage inside a cell. This schematic model was developed in \cite{Srivastava2002} and is repeatedly considered as an example to illustrate the accuracy and computational efficiency of approximate simulations, most recently in \cite{AndersonHigham} in the context of Multilevel Monte Carlo for tau leaping methods. The model is based on a simple network, shown in Figure \ref{network}, which describes six reactions between the viral nucleic acids, classified as genomic (\emph{gen}) or 
template (\emph{tem}), and a viral structural protein (\emph{struc}). For a detailed description of this network and the additional assumptions made when modeling, we refer to \cite{Srivastava2002}. The reaction rates, expressed in day$^{-1}$, are 
\begin{equation*}
\left.\begin{array}{rclrclrcl}
r_1 &=& 0.025, & r_2 &=& 0.25, & r_3 &=& 1,\\[1ex]
r_4 &=& 7.5\cdot10^{-6}, & r_5&=& 1000, & r_6 &=& 1.9985.
\end{array}\right.
\end{equation*}
\begin{figure}
\begin{minipage}{0.65\textwidth}
\begin{center}
\includegraphics[width=0.7\textwidth,clip=true,trim=0 0 0 0]{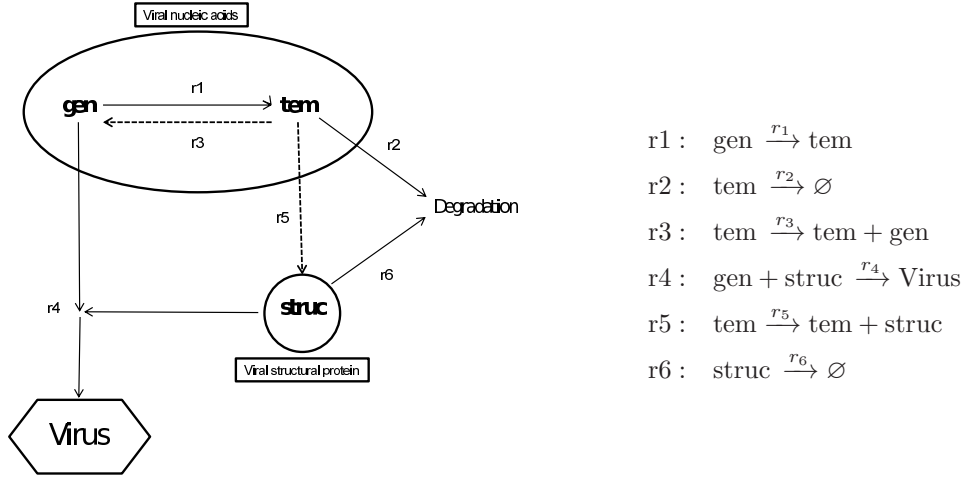}
\end{center}
\end{minipage}
\begin{minipage}{.30\textwidth}
 \begin{align*}
  \text{r1} :& \quad \text{gen} \, \xrightarrow{r_1}\text{tem} \\ 
  \text{r2} :& \quad \text{tem} \, \xrightarrow{r_2}  \varnothing \\
  \text{r3} :& \quad \text{tem} \, \xrightarrow{r_3}  \text{tem} + \text{gen} \\
  \text{r4} :& \quad \text{gen} + \text{struc} \, \xrightarrow{r_4} \text{Virus} \\
  \text{r5} :& \quad \text{tem}  \xrightarrow{r_5}  \text{tem} + \text{struc} \\
  \text{r6} :& \quad \text{struc} \, \xrightarrow{r_6}  \varnothing
 \end{align*}
\end{minipage}
\caption{Reaction diagram between the three components of a viral replication cycle: genome (\textbf{gen}), template (\textbf{tem}) and structural (\textbf{struc}). The two reactions $r3$ and $r5$, represented by ($\mathbb{---}$) lines, are catalytic and use the component \textbf{tem} for the synthesis of the other two components.}\label{network}
\end{figure}

This model is interesting to analyse for two reasons. Firstly the three components vary on different orders of magnitude: for the same time scale, the range of \emph{struc} oscillates between $10^2$ to $10^3$ molecules, whereas the fluctuation of \emph{ten} and \emph{gen} is not more than $10$ molecules. Secondly the model presents a bimodal distribution: a first peak can be achived when all the species become populated (``typical'' infection), while a second peak can be generated if all the species are eliminated from the cell (``aborted'' infection). It was shown that both features are better described by a stochastic model than a deterministic one  \cite{Haseltine}. A hybrid version was introduced and considered in \cite{Haseltine, Kalantzis, Huisinga}, based on the observation that, close to the equilibrium point of the corresponding deterministic system $(20,200,10\cdot 10^3)$, the rates of reactions $r5$ and $r6$ are three orders of magnitude higher than those of the reactions $r1$ to $r4$, i.e., $5,5,20$ and $15$ compared with $20\cdot10^3$ and $19\cdot10^3$. It is then chosen to model the reactions $r5$ and $r6$ deterministically while the others are modelled stochastically. The proposed hybrid model thus takes the form 
\begin{align}\label{hybrid}
 X_1(t) = & X_1(0) + Y_1\biggl(\int_0^t r_1X_2(s)ds\biggr) - Y_2\biggl(\int_0^t r_2X_1(s)ds\biggr), \nonumber  \\
X_2(t) = & X_2(0) - Y_1\biggl(\int_0^t r_1X_2(s)ds\biggr) + Y_3\biggl(\int_0^t r_3X_1(s)ds\biggr) -  Y_4\biggl(\int_0^t r_4X_2(s)X_3(s)ds\biggr),   \\
X_3(t) = & X_3(0) + \int_0^t (r_5X_1(s) -  r_6X_3(s) ) ds  -  Y_4\biggl(\int_0^t r_4X_2(s)X_3(s)ds\biggr),  \nonumber
\end{align}
where $X(t)=(X_1(t),X_2(t),X_3(t))=(tem,gen,struc)$ denotes the number of molecules in the system at time $t \in [0,T]$ and $Y_1$,\ldots, $Y_4$ are independent unit rate Poisson processes.

We next rescale the equations \eqref{hybrid}, cf.~Section \ref{section_scaled_system}. For the convergence experiments the scaled system possesses the advantage that the errors of all components of the resulting process $X^N(t)$ are of the same order of magnitude, however all dynamics of the process are exactly preserved. We rescale the model \eqref{hybrid} by defining $X_i^N = N^{-\alpha_i}X_i$ and and thus by substitution and rewriting the model in vectorial form we obtain 
\begin{eqnarray*}
X^N(t) &=& X^N(0) + N^\eta\int_0^t\left(\!\!\begin{array}{ccc}
 0 & 0 & 0 \\
 0 & 0 & 0 \\
 N^{c_5}r_5 & 0 & -N^{c_6}r_6
\end{array}\!\!\right)\, X^N(s)\,\dif s\\ &&\mbox{}
+ Y_1\biggl(N^\gamma\int_0^t N^{c_1}r_1X_2^N(s)\,\dif s\biggr)\,\left(\!\!\begin{array}{c} N^{-\alpha_1}\\ -N^{-\alpha_2}\\ 0 \end{array}\!\!\right)
+ Y_2\biggl(N^\gamma\int_0^t N^{c_2}r_2X_1^N(s)\,\dif s\biggr)\,\left(\!\!\begin{array}{c} -N^{-\alpha_1}\\ 0\\ 0 \end{array}\!\!\right)\\
&&\mbox{}
+ Y_3\biggl(N^\gamma\int_0^t N^{c_3}r_3X_1^N(s)\,\dif s\biggr)\,\left(\!\!\begin{array}{c} 0\\ N^{-\alpha_2}\\ 0 \end{array}\!\!\right)
+ Y_4\biggl(N^\gamma\int_0^t N^{c_4}r_4X_2^N(s)X^N_3(s)\,\dif s\biggr)\,\left(\!\!\begin{array}{c} 0\\ -N^{-\alpha_2}\\ -N^{-\alpha_3} \end{array}\!\!\right).
\end{eqnarray*}
Here we have chosen the rescaling coefficients as follows: $N=10000$, $\alpha=(1/4,1/2,1)$, for the remaining exponents $(c_1,\ldots,c_6)=(1/2,1/4,1/4,3/2,-3/4,0)$ and thus we obtain for the noise intensities $\eta,\gamma=0$.

For the convergence experiment using the scaled system we set the initial $X^N(0)=(2, 2, 1)$ which is the equilibrium of the corresponding deterministic reaction rate model after scaling and we have simulated the paths over the interval $[0,10]$. We implemented the explicit and implicit Euler method combined with an Euler quadrature rule, the trapezoidal rule combined with a the trapezoidal and with the midpoint quadrature rule as well as with their improved versions. During the simulations, we occasionally encountered the problem of the presence of negative populations. In the literature this problem is well known and recently several solutions were proposed to avoid this unacceptable occurrence \cite{Gillespie2005, BurrageTian2004}. In order to resolve this problem we reset the value of any negative component to zero after any step. As an exact solution cannot be constructed for this example we used a numerical reference solution with a the small stepsize of $h=\frac{1}{320}$. The results are reported in Figure \ref{example_2_strong} where again we can observe a convergence behaviour along the theoretical findings.

\begin{figure}[htp]
\begin{center}
\includegraphics[width=0.75\textwidth, clip=true, trim= 30 25 40 10]{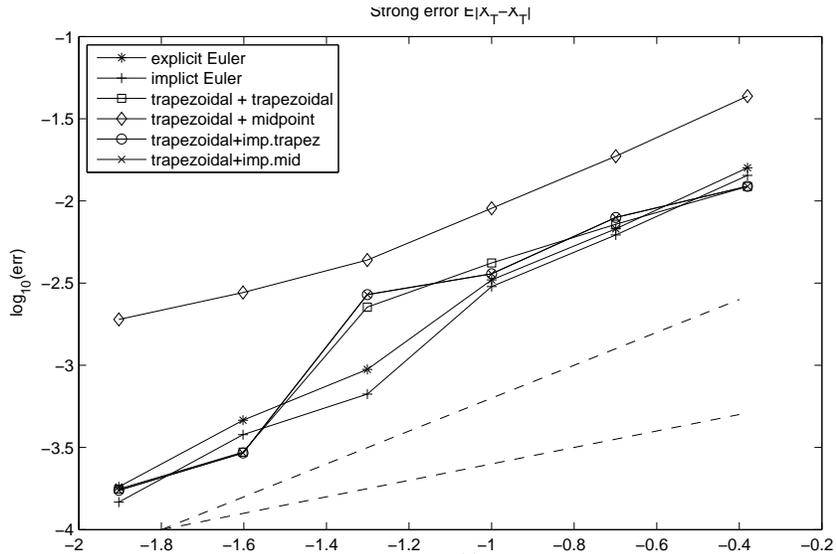}
\end{center}\caption{Convergence plots of the strong error for the scaled bacteriophage model. The dashed lines are  order lines with slope $0.5$ and $1$ for comparison.}\label{example_2_strong}
\end{figure}

\section{Conclusions}\label{section_conclusions}

In this study we have first established the equivalence between RTEs of the form \eqref{random_time_change_eq} and certain PDMPs which arise, e.g., in biochemical applications. Secondly, we have developed a convergence analysis for fixed time step approximation methods for RTEs in the strong sense. The development of a convergence analysis for numerical methods for RTEs which has not been possible for PDMPs shows that it is advantageous to write PDMPs as RTEs when possible.
For the numerical analysis we have restricted ourselves to equations with sufficiently smooth and bounded autonomous coefficients and equidistant approximation grids. Extensions to grids with variable step sizes and to equations with time-dependent coefficients are trivial generalisations of our main convergence result in Theorem \ref{convergence_theorem}. In the former case one simply has to chose the maximal step size as the parameter for the convergence. In the latter case of non-homogeneous equations \eqref{random_time_change_eq}, i.e., the coefficients and rates further depend explicitly on time, passing to the space-time process essentially reduces the problem to the current setting and for the numerical solution we simply choose the suitable time-dependent methods. 
We further conjecture that combining the present analysis with suitable methods employed already in the analysis of stochastic differential equations might provide extensions of the convergence results for less restrictive assumptions on the coefficients, e.g., unbounded coefficients or coefficients satisfying only a local or a one-sided Lipschitz condition.
This conjecture is also supported by the second numerical example we presented wherein the conditions on the coefficients are actually violated. In general the numerical examples illustrated the theoretical findings that using deterministically second order methods yielded an improvement over first order methods in regimes when the noise is small compared to the deterministic part of the equation. We conjecture comparing to results for jump diffusions, see \cite{BuckwarRiedlerJSDEs} that these observations might be even pronounced when using methods of even higher deterministic order, e.g., Runge-Kutta methods for the drift part. We note that our general convergence theorem can be used to analyse the error also for this type of methods. One restriction, however, of extending the presented method for general PDMPs is that due to the equivalence to RTEs of the form \eqref{random_time_change_eq} the method is restricted to processes with a finite number of possible jump heights. We plan to address this important issue in future work as we are convinced that for high frequency jumps fixed time step methods will be more efficient and thus the best choice in practice.

We close the study with a brief discussion of an alternative approach to approximating jump processes with state-dependent intensities. A weak jump-adapted Euler-Maruyama method for jump-diffusion with state-dependent intensities was introduced in \cite{GlaMer2}. If considered without a Brownian motion part solutions to these equations are again versions of PDMPs and thus, if the jump structure is appropriate, also of RTEs. The central idea for the construction of the approximation method in \cite{GlaMer2} is a thinning procedure. If the total jump intensity is bounded, then such a process is further equivalent to a process driven by a Poisson random measure with constant intensity given by an upper bound on the state-dependent intensity. The jumps of the original state-dependent process are choosen out of the jumps of the new dominating Poisson random measure by a rejection sampling method implemented via a discontinuous jump coefficient in the evolution equation.
In this way an alternative closed form representation to \eqref{random_time_change_eq} of certain PDMPs is obtained as a jump diffusion (without Brownian motion part and) with a discontinuous jump coefficient.  
The main difference to our approach is the following: As it is the case in the use of the thinning procedure, we presently also assume that the rates are bounded. However, this assumption is just a sufficient condition for proving convergence of the numerical approximation, whereas for the thinning procedure it is a necessary condition for the definition of the approximation method. Furthermore, numerical approximations based on thinning decrease in efficiency the wider the rates fluctuate. If the upper bound on the jump rate is much larger than typical values of the state-dependent rate then a large number of jumps are rejected. This decrease in efficiency does not occur for the methods in this study. Here the only influence of the state-dependent rate on the error is its Lipschitz constant, hence the method decreases in efficiency when it is large, i.e., when the rates varies very fast, which is typically referred to as stiffness in numerical analysis.


\bibliographystyle{plain}
\bibliography{bibliography}

\clearpage

\appendix

\section{Martingale problem for Random Time Change Equations}\label{section_app_mart_problem}

In order to establish the equivalence in law of PDMPs and solutions of equations of the form \eqref{random_time_change_eq} we need to establish the following facts. First, necessarily a solution to \eqref{random_time_change_eq} needs to exists. Second, this solution solves the same martingale problem as the PDMP and finally that the solution of the martingale problem is unique. We discuss these points in this appendix, where the last point was established in \cite{Debussche} and is reported in Theorem \ref{Debussche_theorem} below and the first two issues are stated in Theorem \ref{theorem_equivalence}, the proof of which is presented below.\medskip

Recall that throughout the study $(\Omega,\sF,\Pr)$ denotes a complete probability space and further recall that a solution to the martingale problem posed by a generator $\mathcal{A}$ and a characterised family of functions $F:\rr^d\to\rr$ is a stochastic process $X(t)$ satisfying that
\begin{equation}
F(X(t))-F(X(0))-\int_0^t\mathcal{A}F(X(s))\,\dif s
\end{equation}
is a martingale with respect to the natural filtration $(\sF^X_t)_{t\geq 0}$ for any such function $F$, cf.~\cite[p.~173]{EthierKurtz}. The martingale problem is well-posed if for a given initial law a unique solution exists. Here uniqueness refers to the law of the resulting process. We repeat that $C^1_b(\rr^d)$ denotes the set of functions $F:\rr^d\to\rr$ which are bounded and continuously differentiable with bounded derivatives. For the sake of completeness we state the theorem that provides well-posedness of the martingale problem posed by the generator of a PDMP of the class relevant in this study.

\begin{theorem}[\textnormal{\cite[Thm.~2.5]{Debussche}}]\label{Debussche_theorem} Let $f$ and $\lambda_k$ be in $\in C^1_b(\rr^d)$. Then, it holds that the law of a PDMP as described in \textnormal{Section \ref{section_set_up}} is for given initial law the unique solution to the martingale problem posed by the generator
\begin{equation}\label{generator_pdmp_appendix}
\mathcal{A}F(x)=\nabla F[f](x)+\sum_{k=1}^p\lambda_k(x)\,\bigl( F(x+\nu_k)-F(x)\bigr)
\end{equation}
for $F\in C^1_b(\rr^d)$.
\end{theorem}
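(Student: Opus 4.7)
The theorem has two parts: the Davis-constructed PDMP solves the martingale problem (existence), and any solution has the same law as the PDMP (uniqueness). I would address them separately.

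For existence, I would apply Dynkin's formula path-by-path to the Davis construction. Between two consecutive jump times the process follows the flow of $f$, so $t\mapsto F(X_\tn{PDMP}(t))$ is absolutely continuous with derivative $\nabla F[f](X_\tn{PDMP}(t))$. At each jump time, the conditional expectation of the jump increment of $F$ given the pre-jump state $x$ equals $\sum_k (\lambda_k(x)/\lambda(x))(F(x+\nu_k)-F(x))$ by the definition of $\mu$, and the inter-jump waiting times are exponentially distributed with intensity $\lambda(X_\tn{PDMP})$. A standard compensation argument (essentially that of Lemma~\ref{lem_ito_formula}) then shows that the compensator of the jump martingale is $\int_0^t\sum_k\lambda_k(X_\tn{PDMP}(s))(F(X_\tn{PDMP}(s)+\nu_k)-F(X_\tn{PDMP}(s)))\,\dif s$; adding the deterministic-motion and jump compensators yields $\int_0^t\mathcal{A}F(X_\tn{PDMP}(s))\,\dif s$. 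Boundedness of $F$, $\nabla F$, $f$ and $\lambda_k$ guarantees integrability of all terms.

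For uniqueness, the plan is a bounded-perturbation argument for semigroups. Decompose $\mathcal{A}=\mathcal{A}_0+\mathcal{B}$ with $\mathcal{A}_0 F:=\nabla F[f]$ and $(\mathcal{B}F)(x):=\sum_k\lambda_k(x)(F(x+\nu_k)-F(x))$. Since $f\in C^1_b$, the flow $\phi$ of $\dot x=f(x)$ is globally defined and $C^1$ in the initial condition with derivatives bounded uniformly on compact time intervals; hence $T^0_t F(x):=F(\phi(t,x))$ defines a strongly continuous semigroup on $C^1_b$ with generator $\mathcal{A}_0$. The operator $\mathcal{B}$ is bounded on $C_b$, and it maps $C^1_b$ into itself because $\lambda_k\in C^1_b$. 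The bounded-perturbation theorem then produces a unique semigroup $T_t$ on $C^1_b$ with generator $\mathcal{A}$, given by the Dyson expansion, equivalently by the variation-of-constants identity $T_tF=T^0_tF+\int_0^tT^0_{t-s}\mathcal{B}T_sF\,\dif s$.

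To identify the one-dimensional marginals of any solution $X$ with this semigroup, I would apply the martingale problem to the time-dependent test function $(s,y)\mapsto T^0_{t-s}F(y)$ on $[0,t]\times\rr^d$ for fixed $t>0$. The $s$-derivative exactly cancels $\mathcal{A}_0$ in the generator, leaving that $F(X(t))-T^0_tF(X(0))-\int_0^t(\mathcal{B}T^0_{t-s}F)(X(s))\,\dif s$ is a mean-zero martingale. Taking expectations under $\Pr_x$ and iterating the resulting integral equation reproduces precisely the Dyson series for $T_tF(x)$, so $\EX_xF(X(t))=T_tF(x)$. Uniqueness of one-dimensional distributions upgrades to uniqueness of all finite-dimensional distributions via the strong Markov property, which is a standard consequence of well-posedness of the martingale problem (cf.\ Ethier--Kurtz, Thm.~4.4.2). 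The main obstacle is the time-dependent-test-function step: the martingale problem is stated only for time-independent $F\in C^1_b$, so it must be extended to the space-time process. This requires showing that $(s,y)\mapsto T^0_{t-s}F(y)$ is jointly $C^1$ with bounded partials and that $\partial_s T^0_{t-s}F=-\mathcal{A}_0T^0_{t-s}F$ lies in $C^1_b$ uniformly in $s\in[0,t]$, which follows from smooth dependence of $\phi$ on initial conditions under $f\in C^1_b$ but must be carried out carefully; this is the technical heart of the proof.
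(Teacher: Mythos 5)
The paper does not actually prove this statement: Theorem \ref{Debussche_theorem} is quoted verbatim from \cite[Thm.~2.5]{Debussche} precisely so that well-posedness of the martingale problem can be taken as given, and the appendix only proves the companion result (Theorem \ref{theorem_appendix}) that the RTE solution solves that same martingale problem. So there is no in-paper proof to compare yours against; what can be said is whether your blind reconstruction is a viable proof of the cited result.

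It is, in outline, a correct and standard one. The existence half (Dynkin/compensation along the Davis construction, using boundedness of $\lambda$ to rule out accumulation of jump times and to get integrability) is exactly how one shows $C^1_b(\rr^d)$ lies in the extended generator's domain. The uniqueness half via the splitting $\mathcal{A}=\mathcal{A}_0+\mathcal{B}$, the flow semigroup $T^0_t$, the space-time test functions $(s,y)\mapsto T^0_{t-s}F(y)$, the resulting variation-of-constants identity and its Dyson iteration, followed by the one-dimensional-to-finite-dimensional upgrade of Ethier--Kurtz, is the classical bounded-perturbation route and does work here because $\mathcal{B}$ is bounded on $C_b$ and preserves $C^1_b$. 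Two caveats you should tighten if you write this out. First, $T^0_t$ is generally \emph{not} strongly continuous on $C^1_b$ in the $C^1$ norm (that would need uniform continuity of $\nabla F$ and of $D\phi$), so you cannot literally invoke the Phillips perturbation theorem on that space; instead define $T_t$ directly by the Dyson series on $C_b$, note that each iterate $\mathcal{B}T^0_{\cdot}\cdots\mathcal{B}T^0_{\cdot}F$ stays in $C^1_b$ with norms controlled on compact time intervals, and use only sup-norm convergence of the series --- that is all the identification $\EX_x F(X(t))=T_tF(x)$ requires. Second, the inter-jump waiting times are not exponential unless $\lambda$ is constant along the flow; the survivor function is $\exp(-\int_0^t\lambda(\phi(s,x))\,\dif s)$, and the compensation argument should be phrased in terms of this hazard rate. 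Neither point is a gap in the idea, only in the wording.
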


We now proceed to the discussion of the random time change equation \eqref{random_time_change_eq}, i.e., equations of the form
\begin{equation}\label{appendix_time_change_eq}
X(t)=X(0)+\int_0^tf(X(s))\,\dif s+\sum_{k=1}^pY_k\Bigl(\int_0^t\lambda_k(X(s))\,\dif s\Bigr)\,\nu_k\,,
\end{equation}
where $Y_k$ are independent unit rate Poisson processes defined on $(\Omega,\sF,\Pr)$ and present the proof of Theorem \ref{theorem_equivalence} which we restate for convenience. It provides the well-posedness of the equation \eqref{appendix_time_change_eq} the equivalence of the solution process to a PDMP.

\begin{theorem}\label{theorem_appendix} Let $f$ and $\lambda_k$ be in $\in C^1_b(\rr^d)$, then it holds that equation \eqref{appendix_time_change_eq} possesses for almost all $\omega\in\Omega$ a pathwise unique  c\`adl\`ag solution which defines a stochastic process. Furthermore, this solution process solves the martingale problem posed by the generator \eqref{generator_pdmp_appendix} and initial law given by the law of $X(0)$. 
\end{theorem}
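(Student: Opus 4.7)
The plan is to split the proof into (a) a pathwise construction establishing existence and uniqueness and (b) a compensation argument identifying the solution as a solution to the martingale problem.

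For (a), I would fix $\omega$ in the full-measure set on which every $Y_k(\cdot,\omega)$ is c\`adl\`ag with only isolated jumps, and build $X(\cdot,\omega)$ iteratively. Starting from $\tau_0=0$ with $X(\tau_0)=X(0,\omega)$, I solve the inter-jump ODE $\dot{Y}=f(Y)$ on $[\tau_0,\infty)$, which admits a unique solution by the Cauchy--Lipschitz theorem since $f\in C^1_b$. I define the next jump time $\tau_1$ as the first $t>\tau_0$ at which the accumulated intensity $\int_{\tau_0}^t\lambda_k(Y(s))\,\dif s$ reaches the next unsampled jump time of $Y_k$ for some $k$; boundedness of $\lambda_k$ ensures that this occurs at a unique index $k^\ast$. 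Setting $X(\tau_1)=Y(\tau_1)+\nu_{k^\ast}$ and iterating produces a piecewise-defined process. Non-explosion on each compact $[0,T]$ follows from the domination of the jump count by $\sum_{k=1}^p Y_k(T\|\lambda_k\|_0,\omega)<\infty$ a.s. Pathwise uniqueness is immediate because any c\`adl\`ag solution must follow the same inter-jump ODE and must jump precisely when the same time-changed Poisson processes jump, both of which are prescribed by $\omega$. Joint $(t,\omega)$-measurability of the constructed $X$ follows from the measurability of the ODE flow together with the stopping-time character of the successive jump times.

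For (b), let $F\in C^1_b(\rr^d)$. The finite-variation structure of $X$ on $[0,t]$, together with the fundamental theorem of calculus between jumps and a telescoping sum across jump times, yields the pathwise identity
\begin{equation*}
F(X(t))-F(X(0))=\int_0^t \nabla F[f](X(s))\,\dif s+\sum_{k=1}^p\int_{(0,t]}\bigl(F(X(s-)+\nu_k)-F(X(s-))\bigr)\,\dif N_k(s),
\end{equation*}
where $N_k(s):=Y_k\bigl(\int_0^s\lambda_k(X(r))\,\dif r\bigr)$. The crucial point is then compensation: the discussion in Section \ref{section_set_up} shows that $\tau_k(t)=\int_0^t\lambda_k(X(s))\,\dif s$ is a bounded $(\sF_u)_{u\in\rr_+^p}$-stopping time, so by optional stopping \eqref{app_poisson_martingales} gives that $N_k-\tau_k$ is a $(\mathcal{G}_t)$-martingale. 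Since the integrand $F(X(s-)+\nu_k)-F(X(s-))$ is $\mathcal{G}_{s-}$-predictable and bounded by $2\|F\|_0$, integrating it against $\dif N_k-\dif\tau_k$ produces a true $(\mathcal{G}_t)$-martingale $M^F(t)$. Rearranging gives
\begin{equation*}
F(X(t))-F(X(0))=\int_0^t \mathcal{A}F(X(s))\,\dif s+M^F(t)
\end{equation*}
with $\mathcal{A}$ as in \eqref{generator_pdmp_appendix}. Since $M^F$ is $(\mathcal{G}_t)$-adapted and $\sF^X_t\subseteq\mathcal{G}_t$, it remains a martingale upon projection onto $(\sF^X_t)$, which is precisely what the martingale problem requires.

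The main obstacle I anticipate is justifying rigorously that the compensated jump integral is a true martingale and not merely a local one. The pathwise construction only gives an \emph{a priori} locally integrable compensator; upgrading to an honest $(\mathcal{G}_t)$-martingale requires the boundedness of $F$, $f$ and $\lambda_k$ together with Section \ref{section_set_up}'s identification of $\tau_k(t)$ as a stopping time for the correctly enlarged multi-parameter filtration, without which the optional stopping step is unavailable. Once this filtration-theoretic scaffolding is in place, the remainder of the argument --- matching the drift and jump compensators to the two summands of $\mathcal{A}F$ in \eqref{generator_pdmp_appendix} and checking the adaptedness details --- is routine bookkeeping.
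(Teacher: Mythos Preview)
Your part (a) is essentially the same construction as the paper's --- both iterate ODE solutions between the successive jumps of the time-changed Poisson processes and appeal to boundedness of the $\lambda_k$ for non-explosion --- so there is nothing to add there.

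Your part (b), however, takes a genuinely different route. The paper does \emph{not} argue via the pathwise change-of-variables/compensation identity you wrote down. Instead it embeds the solution into the abstract random-time-change framework of \cite[Sec.~6.2]{EthierKurtz}: it splits the drift into its positive and negative parts, introduces the trivial process $T(t)=t$ as an additional ``time-changed'' component, and lifts $X$ to a process $Z$ in $\rr^{p+2d}$ with $X=\Gamma(Z)$ for a fixed affine map $\Gamma$. The Ethier--Kurtz theory then delivers directly that $Z$ solves a martingale problem with an explicit product-form generator $\mathcal{A}_Z$, and a short computation shows $\mathcal{A}_Z(F\circ\Gamma)=\mathcal{A}F$ for $F\in C^1_b$. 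Your approach is more elementary and self-contained: it recovers the It\^o decomposition \eqref{ito_formula} by hand and identifies the martingale part through optional stopping on the Poisson compensators, rather than importing the general multi-parameter time-change machinery. The paper's route buys a clean reduction to an off-the-shelf result, at the cost of the somewhat artificial splitting of $f$; yours buys transparency and avoids the $\rr^{p+2d}$ lift, at the cost of having to justify the predictable-integrand/true-martingale step yourself --- which you correctly flag as the only nontrivial point, and which is indeed handled by the bounded-stopping-time argument from Section~\ref{section_set_up}.
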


\begin{remark} Prior to presenting the proof of the theorem we state some immediate consequences of this theorem. First, the uniqueness of the martingale problem implies the (strong) Markov property for the solution of equation \eqref{appendix_time_change_eq} with respect to the filtration it generates \cite[Thm.~4.2, Chap.~4]{EthierKurtz}. Furthermore, the solution is non-anticipating, which in the present context means that the increments of the Poisson processes under the random time change are independent. It is clear that these properties still hold when we consider the finer filtration $(\mathcal{G}_t)_{t\geq 0}$ defined in Section \ref{section_set_up} instead of the natural filtration.
\end{remark}

\begin{proof} The proof of the theorem is split into two parts. In the first part (a) we show the existence of a unique solution to equation \eqref{appendix_time_change_eq} iteratively constructing this solution. In the second part (b) we establish that this solution solves the stated martingale problem. This part of the proof follows the outline given in \cite[Sec.~6.4]{EthierKurtz}. Finally, it follows from \cite[Chap.~6, Lemma 2.1]{EthierKurtz} that the collection of pathwise unique solutions defines a stochastic process. \medskip

(a)\quad We prove existence and uniqueness of a solution to the random time change equation \eqref{appendix_time_change_eq}. We set $X_0(t)$ to be the solution of the deterministic part of the equation, i.e., the system $\dot x =f(x)$ with initial condition $x_0=X(0)$ and define the iteration
\begin{equation*}
\nbar X_1(t)=X(0)+\int_0^t f(X_0(s))\dif s+ \sum_{k=1}^pY_k\Bigl(\int_0^t\lambda_k(X_0(s))\,\dif s\Bigr)\nu_k\,,
\end{equation*}
and
\begin{equation*}
X_1(t)\,=\,\left\{\begin{array}{ll}
\nbar X_1(t) & \tn{if } 0\leq t \leq \nbar\tau_1,\\[2ex]
\nbar X_1(\nbar\tau_1)+\displaystyle\int_0^{t-\nbar\tau_1} f(X_1(s))\dif s &\tn{if } \nbar\tau_1<t,
\end{array}\right.
\end{equation*}
where $\nbar\tau_1$ denotes the first jump of the process $\nbar X_1$. Note that the process $\nbar X_1$ solves \eqref{appendix_time_change_eq} up to and including its first jump time and the process $X_1$ solves \eqref{appendix_time_change_eq} up to and excluding the second jump time. We no proceed iteratively setting
\begin{equation*}
\nbar X_n(t)=X(0)+\int_0^t f(X_{n-1}(s))\dif s+ \sum_{k=1}^pY_k\Bigl(\int_0^t\lambda_k(X_{n-1}(s))\,\dif s\Bigr)\nu_k\,,
\end{equation*}
and
\begin{equation*}
X_n(t)\,=\,\left\{\begin{array}{ll}
\nbar X_{n}(t) & \tn{if } 0\leq t\leq \nbar\tau_n,\\[2ex]
\nbar X_n(\nbar\tau_n)+\displaystyle\int_0^{t-\nbar\tau_n} f(X_n(s))\dif s & \tn{if } \nbar\tau_1<t,
\end{array}\right.
\end{equation*}
where $\nbar\tau_n$ denotes the $n$th jump of $\nbar X_n$. As before it holds that $\nbar X_n$ solves \eqref{appendix_time_change_eq} up to and including its $n$th jump time and the process $X_n$ solves \eqref{appendix_time_change_eq} up to and excluding its $(n+1)$th jump time. Note that the boundedness of the $\lambda_k$ implies that $\nbar \tau_n\to\infty$ a.s.~and thus setting $X(t):=\lim_{n\to\infty} X_n(t)$ defines a c\`adl\`ag process which solves \eqref{appendix_time_change_eq} for all times $t\geq 0$. Finally, uniqueness of the solution follows immediately from the uniqueness of the solution to part $\dot x=f(x)$.\medskip

(b) We proceed to show that this process now solves the martingale problem posed by \eqref{generator_pdmp_appendix}. Introducing the (trivial) Markov process $T(t)=t$ yields that the solution process $(X(t))_{t\geq 0}$ satisfies
\begin{eqnarray*}
X(t)&=&X(0)+\sum_{j=1}^d T\Bigr(\int_0^t f_j(X(s))_+\,\dif s\Bigl)\,e_j +\sum_{j=1}^d T\Bigr(\int_0^t f_j(X(s))_-\,\dif s\Bigl)\,(-e_j)\\[1.5ex]
&&\mbox{}+\sum_{k=1}^p Y_k\Bigr(\int_0^t \lambda_k(X(s))\,\dif s\Bigl)\,\nu_k\,,
\end{eqnarray*}
where the subscripts $+/-$ denote the positive and negative part of a function, respectively, and $e_j$ stands for the $j$th unit vector in $\rr^d$. We define for $z\in\rr^{p+2d}$ the map
\begin{equation}
\Gamma(z)=X(0)+\sum_{l=1}^p z_l\,\nu_l+\sum_{l=p+1}^{p+d} z_l\,(e_j)+\sum_{l=p+d+1}^{p+d} z_l\,(-e_j)\,.
\end{equation}
and using the definition of a process $Z_l$, $l=1,\ldots,p+2d$, given by 
\begin{equation*}
Z_l(t)=\left\{\begin{array}{cl}
Y_l\displaystyle\Bigl(\int_0^t\lambda_l(X(s))\,\dif s\Bigr)& \textnormal{if } l=1,\ldots,p,\\[2ex]
T\displaystyle\Bigl(\int_0^t f_{l-p}(X(s))_+\,\dif s\Bigr)& \textnormal{if } l=p+1,\ldots,p+d,\\[2ex]
T\displaystyle\Bigl(\int_0^t f_{l-p-d}(X(s))_-\,\dif s\Bigr)& \textnormal{if } l=p+d+1,\ldots,p+2d\,,
\end{array}\right.
\end{equation*}
we obtain that $X(s)=\Gamma(Z(s))$ and therefore
\begin{equation}
Z_l(t)=\left\{\begin{array}{cl}
Y_l\displaystyle\Bigl(\int_0^t\lambda_l(\Gamma(Z(s)))\,\dif s\Bigr)& \textnormal{if } l=1,\ldots,p,\\[2ex]
T\displaystyle\Bigl(\int_0^t f_{l-p}(\Gamma(Z(s)))_+\,\dif s\Bigr)& \textnormal{if } l=p+1,\ldots,p+d,\\[2ex]
T\displaystyle\Bigl(\int_0^t f_{l-p-d}(\Gamma(Z(s)))_-\,\dif s\Bigr)& \textnormal{if } l=p+d+1,\ldots,p+2d\,.
\end{array}\right.
\end{equation}
We now infer from the results in \cite[Sec.~6.2]{EthierKurtz} that the process $(Z(t))_{t\geq 0}$ is the unique solution to the martingale problem posed by functions of the form $F(z)=\prod_{l=1}^{p+2d} F_i(z_i)$, $F_i\in C^1_b(\rr)$, and the generator
\begin{eqnarray}\label{martingale_problem_random_time}
\lefteqn{\mathcal{A}_ZF(z)\ = }\nonumber\\
&=&\sum_{l=1}^p \lambda_l(\Gamma(z))\Bigl( F_l(z_l+1)-F_l(z_l)\Bigl)\prod_{j\neq l}F_j(z_j)+\sum_{l=p+1}^{p+d} \Bigl(f_{l-p}(\Gamma(z))_+\nabla F_{l-p}(z_l)\Bigr)\prod_{k\neq l-p}F_j(z_j)\nonumber\\[2ex]
&&\mbox{}- \sum_{l=p+d+1}^{p+2d} \Bigl(f_{l-p-d}(\Gamma(z))_-\nabla F_{l-p-d}(z_l)\Bigr)\prod_{k\neq l-p-d}F_j(z_j)\,.
\end{eqnarray}
Moreover, this process also solves the martingale problem posed by the linear span of such functions and the bp-closure thereof, cf.~\cite[p.~174]{EthierKurtz}, which contains any bounded, continuously differentiable function on $\rr^d$ with bounded first derivatives. 
Particularly, the bp-closure contains $F\circ\Gamma$ for any $F\in C^1_b(\rr^d)$. Inserting such a function into the generator \eqref{martingale_problem_random_time} we find for its continuous part
\begin{eqnarray*}
&&\hspace{-20pt}\sum_{l=p+1}^{p+d} f_{l-p}(X(s))_+\nabla_{z_l} F\Bigl(X(0)+\sum_{j=1}^p Z_j(s)\,\nu_j+\sum_{j=p+1}^{p+d} Z_j(s)\,(e_{j-p})+\sum_{j=p+d+1}^{p+2d} Z_j(s)\,(-e_{j-p-d})\Bigr)\nonumber\\[2ex]
&&\hspace{-20pt}\mbox{}- \sum_{l=p+d+1}^{p+2d} f_{l-p-d}(X(s))_-\nabla_{z_{l-p-d}} F\Bigl(X(0)+\sum_{j=1}^p Z_j(s)\,\nu_j+\sum_{j=p+1}^{p+d} Z_j(s)\,(e_{j-p})\\[2ex]
&&\phantom{xxxxxxxxxxxxxxxxxxxxxxxxxxxxxxxxxxxxxxxxxxxxxx}\mbox{}+\sum_{j=p+d+1}^{p+2d} Z_j(s)\,(-e_{j-p-d})\Bigr)\Bigr)\\[2ex]
&&\hspace{-20pt}=\,\sum_{l=1}^d f_j(X(s))_+\nabla F[e_j](X(s)-\sum_{l=1}^d f_j(X(s))_-\nabla F[e_j](X(s))\\[2ex]
&&\hspace{-20pt}=\,\sum_{j=1}^d \nabla F[f_j(X^0(s))\,e_j](X(s))\\[2ex]
&&\hspace{-20pt}=\, \nabla F[f](X(s))
\end{eqnarray*}
and for its jump part
\begin{equation*}
\sum_{l=1}^p \lambda_l(\Gamma(Z(s)))\Bigl( F(\Gamma(Z(s)+e_l))-F(\Gamma(Z(s)))\Bigl)\,=\,
\sum_{l=1}^p \lambda_l(X(s))\Bigl(F(X(s)+\nu_l)-F(X(s))\Bigr)\,.
\end{equation*}
Therefore, we infer the solution $(X(t))_{t\geq 0}$ to \eqref{appendix_time_change_eq} solves the martingale problem posed by the generator
\begin{equation*}
\mathcal{A}F(x)=\nabla F[f](x)+\sum_{k=1}^p\lambda_k(x)\,\bigl( F(x+\nu_k)-F(x)\bigr)
\end{equation*}
with $F\in C^1(\rr^d)$. However, this is just the martingale problem posed by \eqref{generator_pdmp_appendix} and the proof of the theorem is completed.
\end{proof}

\begin{remark} We note that we expect the above theorems to remain valid under less restrictive assumptions on the characteristics $f,\,\lambda_k$ of the PDMP, when instead of the martingale the local martingale problem is considered. E.g., in the case of unbounded coefficients a time change analogously to \cite[Sec.~6.4]{EthierKurtz} first reduces the problem to an equation with bounded coefficients. Then inverting the time change establishes the result for the original local martingale problem. Similarly, it is further explained in \cite{Debussche} that also for the well-posedness of the martingale problem the assumptions can be already weakened. For the existence of a solution to \eqref{appendix_time_change_eq} it is in general sufficient that $\dot x=f(x)$ possesses a unique global solution for any initial condition and that $\int_0^t\sum_{k=1}^p\lambda_k(X_s)<\infty$ a.s.~for all $t\geq 0$. However, as in this study boundedness of the coefficients and their derivatives is always assumed, we chose the simpler presentation in order to focus on the essential and avoid overcomplicating matters.
\end{remark}

\end{document}